\newtheorem{thm}{Theorem}[section]
\newtheorem{prop}[thm]{Proposition}
\theoremstyle{definition}
\theoremstyle{definition}
\newtheorem{rmk}[thm]{Remark}
\newcommand\Q{\mathbb Q}
\newcommand{\CP}{{\mathbb{CP}}}
\newcommand{\Z}{{\mathbb Z}}
\newcommand{\N}[0]{\mathbb{N}}
\newcommand{\K}[0]{\mathbb{K}}
\newcommand{\into}[0]{\hookrightarrow}
\newcommand{\SL}[0]{SL_{2}(\Z)}
\def\qed{\ifmmode $\Box$ \else{\unskip\nobreak\hfil
\penalty50\hskip1em\null\nobreak\hfil $\Box$
\parfillskip=0pt\finalhyphendemerits=0\endgraf}\fi}
\newcommand{\eq}[1][r]
       {\ar@<-3pt>@{->}[#1]
        \ar@<-1pt>@{}[#1]|<{}="gauche"
        \ar@<+0pt>@{}[#1]|-{}="milieu"
        \ar@<+1pt>@{}[#1]|>{}="droite"
        \ar@/^2pt/@{-}"gauche";"milieu"
        \ar@/_2pt/@{-}"milieu";"droite"}
\def\sur{\ar@{>>}[r]}
\newcommand{\imm}[1][r] {\ar@{^{(}->}[#1]}
\newcommand{\coker}[0]{\mathrm{coker}}
\def\H#1{\save
[].[drrr]="g#1"*[F-,]\frm{}\restore}%
\begin{document}


\title[Principal congruence subgroups and geometric representations]{Cohomology of braids, principal congruence subgroups and geometric representations}

\author[F.~Callegaro]{F.~Callegaro}
\address{Scuola Normale Superiore, Pisa, Italy}
\email{f.callegaro@sns.it}

\author[F.~R.~Cohen]{F.~R.~Cohen$^{*}$}
\address{Department of Mathematics,
University of Rochester, Rochester NY 14627, USA}
\email{cohf@math.rochester.edu}
\thanks{$^{~*}$Partially supported by DARPA}

\author[M.~Salvetti]{M.~Salvetti}
\address{Department of Mathematics,
University of Pisa, Pisa Italy}
\email{salvetti@dm.unipi.it}

\date{\today}

\begin{abstract}
The main purpose of this article is to give the integral cohomology 
of classical principal congruence subgroups 
in $SL(2, \mathbb Z)$ as well as their analogues in the 
third braid group with local coefficients in symmetric powers of the 
natural symplectic representation. The resulting answers (1) correspond to 
certain modular forms in characteristic zero, and (2) the cohomology 
of certain spaces in homotopy theory in characteristic $p$. 
The torsion is given in terms of the structure of a ``p-divided 
power algebra''.

The work is an extension of the work in \cite{b3cohom} as
well as extensions of a classical computation of Shimura to integral coefficients.  
The results here contrast the local coefficients such as that in \cite{looij96} and \cite{tillmann10}.


\end{abstract}

\maketitle

\section{Introduction} \   In a previous paper (\cite{b3cohom}) we
considered the ring $M=\mathbb Z[x,y]$ of two variables integral
polynomials as an $\SL$-module: the  action on $M$ is given by
extending to the symmetric algebra the natural  action over $\mathbb Z^2.$
Of course, $M$ decomposes as an
$\SL$-representation into its homogeneous components of degree $n$:
$M=\oplus_{n\geq 0}\ M_n$. We also considered the short exact sequence  
\begin{equation} \label{fibration}
\begin{CD}
1  @>>> \Z  @>j>>  B_3 @>\lambda>> \SL @>>> 1
\end{CD}
\end{equation}
where $\lambda$ is the natural representation of the braid group $B_3$ 
over the homology of a
genus $1$ surface, taking the two standard generators of $B_3$ into the 
automorphisms induced by Dehn twists around one parallel and one meridian
 respectively. The kernel of $\lambda$ is given by twice the center of $B_3.$ 
Therefore,  $M$ is also a $B_3$-module with the induced action.  
In the above cited paper we completely determined the cohomology of $B_3$ and 
of $\SL$ with coefficients in the module $M.$ The answer 
is not at all trivial: the free part of the resulting cohomology is related
 to the dimension of certain spaces of modular functions, while the
 $p$-torsion is expressed in terms of divided  powers rings. 
The intriguing connections of the torsion part with some important topological
 constructions related to homotopy theory is still to be understood. 

It is natural to ask if our methods can be extended to the plethora of
well-known subgroups  
of $\SL.$ In this paper we answer affirmatively to this
question for one of the main classes of subgroups of $\SL,$ the
so called {\it principal congruence subgroups 
of level} $n,$ $\Gamma(n)\subset \SL.$  Recall that $\Gamma(n)$ is
defined as the  kernel of the mod-$n$ reduction map $\SL \to
SL_2(\Z_n).$ We extend our methods to this case, by computing the
cohomology both of $\Gamma(n)$ and of the group $B_{\Gamma(n)}:=\ $ $
\lambda^{-1}(\Gamma(n)),$  with coefficients in the above defined
representations. The case $n=2$ is particularly significant: 
$\Gamma(2)$ is the kernel of the map over the symmetric group
$S_3=SL_2(\mathbb Z_2)$ and $B_{\Gamma(2)}\subset B_3$ is the
pure braid group $P_3$ in three strands.  

With respect to \cite{b3cohom} the computation of the cohomology 
becomes easier from the facts that $\Gamma(n)$ is either a free group of finite rank,
%
%
%
for $n>2,$ or a product of a free group
times $\mathbb Z_2,$ for $n=2.$ Therefore the cohomology of $\Gamma(n)$
is trivial in dimension higher than one (for $n>2$). However, the
description of the first cohomology group is similar to that of $\SL$
(see theorem \ref{H_1gamman}) and is done in terms of a
so called {\it $p-$divided polynomial algebra} described below.   

The complete description of the cohomology of the subgroup
$B_{\Gamma(n)},$ which has cohomological dimension two, is given in
the final theorem \ref{bgamma}.

The work here is an extension of the work in \cite{b3cohom} as
well as extensions of a classical computation of Shimura to integral coefficients.  
The results presented in this paper contrast the local coefficients such as that in \cite{looij96} and \cite{tillmann10}.

\section{Principal congruence subgroups}\label{Principal congruence subgroups}

 It is well known that the subgroup
$\Gamma(n)\subset \SL$  is either a finitely  
generated free group or a product of a finitely generated free group
with $\Z_2$ (see \cite{frasch, gross}).  
This fact follows from the decomposition  $PSL_2(\Z) = \Z_2 * \Z_3,$
so by the Kurosh subgroup  
Theorem (\cite{kurosh})  the kernel of the map
$$
PSL_2(\Z) \to PSL_2(\Z_n)
$$
is always free.
Hence $\Gamma(2)$ is the product of a free group with $\Z_2 = \{I, -I
\},$ while for $n >2$ one gets that 
$\Gamma(n)$ is free.

We write $B_{\Gamma(n)}$ for the inverse image
$\lambda^{-1}(\Gamma(n))$ in $B_3$ (see (\ref{fibration}) in the
introduction).

In the following we show how to extend the
computations in \cite{b3cohom} of the cohomology of $\SL$ and of $B_3$
to that of $\Gamma(n)$ and of $B_{\Gamma(n)}.$  
A special case (as said in the introduction) is $n =2:$
we have the isomorphism $$SL_2(\Z_2) \simeq S_3$$ onto the symmetric
group $S_3,$ while $$B_{\Gamma(2)}\simeq P_3$$ is the pure braid group 
on three strands.

Recall that the group $\SL$ is generated by the elements $$ s_1 =
\left( \begin{array}{cc} 
                    1 & 0 \\ -1 & 1
                   \end{array} \right)
\mbox{ and } s_2 =  \left( \begin{array}{cc}
                    1 & 1 \\ 0 & 1
                   \end{array} \right) $$
and the action on the module $M = \Z[x,y]$ is given by:
$$
s_1: \left\{\begin{array}{l}
             x \mapsto x-y \\ y \mapsto y
            \end{array}
 \right. \quad \quad \quad s_2: \left\{ \begin{array}{ l}
             x \mapsto x \\ y \mapsto x+y.
            \end{array} \right.
$$

We start with some preliminar results.
\begin{prop} \label{p:surj}
Let $m>1$ be an integer. 
Let $G$ be a subgroup of $\SL$ containing the elements $s_1^{b}$ and
$s_2^{c}$ such that $b$ and $c$ 
are coprime with $m$. 
The restriction of the projection $\SL \to SL_2(\Z_{m})$ to the
subgroup $G$ is surjective map.
\end{prop}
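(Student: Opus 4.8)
The goal is to show that $G$ surjects onto $SL_2(\Z_m)$, knowing it contains $s_1^b$ and $s_2^c$ with $\gcd(b,m)=\gcd(c,m)=1$. The natural strategy is to work in the target group $SL_2(\Z_m)$: let $\bar G$ denote the image of $G$ under the reduction map, and show $\bar G = SL_2(\Z_m)$. I would first reduce to the case where $m=p^r$ is a prime power. Indeed, by the Chinese Remainder Theorem, $SL_2(\Z_m) \cong \prod_{p^r \| m} SL_2(\Z_{p^r})$, and a subgroup of a product that surjects onto each factor and whose factors have no common quotients need not be the whole product — so this reduction is not entirely free. A cleaner route avoiding Goursat-type complications: prove the statement directly for all $m$ by exhibiting enough elements, or reduce to $m$ prime (where $SL_2(\Z_p)$ has very restricted subgroup structure) and then lift.

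Here is the concrete approach I would carry out. Since $b$ is invertible mod $m$, there is an integer $b'$ with $bb' \equiv 1 \pmod m$; then $(s_1^b)^{b'}$ reduces mod $m$ to the same element as $s_1$, namely $\left(\begin{smallmatrix} 1 & 0 \\ -1 & 1\end{smallmatrix}\right)$. Hence $\bar G$ contains the reduction of $s_1$, and similarly (using $c' $ with $cc'\equiv 1$) it contains the reduction of $s_2$. But $s_1$ and $s_2$ generate all of $\SL$, so their reductions generate all of $SL_2(\Z_m)$. Therefore $\bar G = SL_2(\Z_m)$, i.e. the restriction of the projection to $G$ is surjective.

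The only subtlety is the claim that $(s_1^b)^{b'}$ and $s_1$ have the same image mod $m$: this holds because reduction mod $m$ is a ring (hence group) homomorphism, so it sends $s_1^{bb'}$ to $\bar s_1^{\,bb'} = \bar s_1^{\,1+km} $ for some integer $k$, and since $\bar s_1$ is unipotent, $\bar s_1^{\,m}$ is $\left(\begin{smallmatrix} 1 & 0 \\ -m & 1\end{smallmatrix}\right) \equiv I \pmod m$; more robustly, one need not even compute — $\bar s_1$ lies in a cyclic group, and $\bar s_1^{\,bb'} = \bar s_1$ precisely because $bb'\equiv 1$ modulo the order of $\bar s_1$, which divides $m$... actually the order of $\bar s_1$ is exactly $m$ (it is $\left(\begin{smallmatrix} 1 & 0 \\ -1 & 1\end{smallmatrix}\right)$, whose powers are $\left(\begin{smallmatrix} 1 & 0 \\ -k & 1\end{smallmatrix}\right)$), so $bb'\equiv 1 \pmod m$ gives exactly $\bar s_1^{\,bb'} = \bar s_1$. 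So there is in fact no obstacle at all; the proposition is essentially formal once one observes that invertibility of $b,c$ mod $m$ lets one recover $\bar s_1, \bar s_2$ inside $\bar G$.

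If one wanted to be careful about one point: the hypothesis $m>1$ is used so that the orders of $\bar s_1, \bar s_2$ are genuinely $m$ (not $1$), and that $s_1, s_2$ generate $\SL$ is the input from the preceding discussion in the paper. I expect the ``main obstacle'' to be purely expository — making sure the reader sees that no deep subgroup-structure theory (Goursat, Dickson's classification of subgroups of $SL_2(\F_p)$, strong approximation, etc.) is needed here, only the homomorphism property of reduction and the fact that $s_1,s_2$ generate $\SL$.
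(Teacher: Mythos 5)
Your argument is exactly the paper's: since $b$ and $c$ are invertible mod $m$, a suitable power of $s_1^{b}$ (resp.\ $s_2^{c}$) reduces to $\bar s_1$ (resp.\ $\bar s_2$) in $SL_2(\Z_m)$, and these generate the target because the full reduction map $\SL \to SL_2(\Z_m)$ is surjective. Your version just spells out the order-$m$ computation for the unipotent matrices, which the paper leaves implicit; there is no substantive difference.
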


\begin{proof} Recall that the projection $\SL \to SL_2(\Z_m)$ is surjective (see, for example, \cite{b3cohom}). As a consequence the proposition is straightforward since $b$ and $c$ are
invertible mod $m$ and hence a suitable power
of $s_1^{b}$ (resp. $s_2^{c}$) maps to $s_1$ (resp. $s_2$) in $SL_2(\Z_{m})$. 
\end{proof}

\begin{prop}\label{p:han}
If the integer $m>1$ factorizes as $m = p_1^{a_1} \cdots p_k^{a_k}$ then 
\begin{equation*} 
SL_2(\Z_m) \simeq SL_2(\Z_{p_1^{a_1}}) \times \cdots \times SL_2(\Z_{p_k^{a_k}}).
\end{equation*}
%
\end{prop}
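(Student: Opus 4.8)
The plan is to establish the isomorphism via the Chinese Remainder Theorem applied to the entries of the matrices. First I would observe that the ring isomorphism $\Z_m \simeq \Z_{p_1^{a_1}} \times \cdots \times \Z_{p_k^{a_k}}$, which is the classical Chinese Remainder Theorem since the prime powers $p_i^{a_i}$ are pairwise coprime, is an isomorphism of commutative rings with unit. This ring isomorphism induces, entrywise, a bijection between $2\times 2$ matrices over $\Z_m$ and tuples of $2\times 2$ matrices over the respective factors, and since the ring isomorphism respects addition and multiplication, this entrywise map is compatible with matrix multiplication; that is, $M_2(\Z_m) \simeq M_2(\Z_{p_1^{a_1}}) \times \cdots \times M_2(\Z_{p_k^{a_k}})$ as rings (or at least as multiplicative monoids, which is all we need).

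Next I would check that this monoid isomorphism restricts to an isomorphism of the unit groups, and more specifically of the subgroups defined by the condition $\det = 1$. The key point is that the determinant is a polynomial in the matrix entries with integer coefficients, so it is preserved under the componentwise ring isomorphism: a matrix over $\Z_m$ has determinant $1$ if and only if each of its images over $\Z_{p_i^{a_i}}$ has determinant $1$ (using that $1 \in \Z_m$ corresponds to the tuple $(1, \ldots, 1)$). Hence the bijection on all $2\times 2$ matrices carries $SL_2(\Z_m)$ onto the subset of tuples each of whose components lies in $SL_2(\Z_{p_i^{a_i}})$, and since it is already multiplicative this restriction is a group isomorphism $SL_2(\Z_m) \simeq SL_2(\Z_{p_1^{a_1}}) \times \cdots \times SL_2(\Z_{p_k^{a_k}})$, as claimed.

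There is essentially no hard part here: the statement is a routine consequence of the Chinese Remainder Theorem together with the functoriality of the $SL_2$ construction with respect to ring homomorphisms. The only point requiring a word of care is that a componentwise product of matrices that each have determinant $1$ does indeed come from a matrix over $\Z_m$ of determinant $1$ — but this is immediate from the fact that the CRT map is a ring isomorphism, so it is surjective onto the product and reflects the relation $\det = 1$. I would therefore present this as a short paragraph, possibly by induction on $k$ to reduce to the two-factor case $SL_2(\Z_{m_1 m_2}) \simeq SL_2(\Z_{m_1}) \times SL_2(\Z_{m_2})$ for coprime $m_1, m_2$, which makes the bookkeeping cleanest.
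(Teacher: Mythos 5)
Your argument is correct and is exactly the route the paper takes: the paper's proof simply cites the Chinese Remainder Theorem, and your write-up fills in the standard details (entrywise ring isomorphism on $M_2$, preservation of the determinant condition). Nothing to add.
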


\begin{proof}
The result is an easy consequence of the Chinese Remainder Theorem (see also \cite{han}). 
\end{proof}

\begin{prop} \label{p:p_does_not_divide}
Let $m$ be a positive integer and let $p$ be a prime that does not divide $m$. For each non-negative integer  $a$ and for any integer $n$ we have 
$$H^0(\Gamma(m);M_n \otimes \Z_{p^a}) = H^0(\SL;M_n \otimes \Z_{p^a}).$$
\end{prop}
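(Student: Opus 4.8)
The plan is to identify $H^0$ with the module of invariants and then exploit the fact that the coefficient module, being finite, carries an $\SL$-action that already factors through a finite congruence quotient. For any group $G$ and $G$-module $N$ one has $H^0(G;N)=N^{G}$, and since $\Gamma(m)\subseteq\SL$ the inclusion $N^{\SL}\subseteq N^{\Gamma(m)}$ holds for free; the whole content of the statement is the reverse inclusion, namely that every element of $M_n\otimes\Z_{p^a}$ fixed by $\Gamma(m)$ is in fact fixed by all of $\SL$.

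First dispose of the degenerate case $a=0$, where $\Z_{p^0}=0$ and both sides vanish, so assume $a\ge 1$. The key observation is that $M_n\otimes\Z_{p^a}\cong\mathrm{Sym}^{n}\!\big((\Z_{p^a})^{2}\big)$, since forming symmetric powers commutes with base change, and that the $\SL$-action on it is the one induced from the tautological action on $(\Z_{p^a})^{2}$. The latter visibly factors through the mod-$p^a$ reduction $\rho_a\colon\SL\to SL_2(\Z_{p^a})$, hence so does the action on $M_n\otimes\Z_{p^a}$. Consequently, for any subgroup $G\subseteq\SL$ the invariant submodule $(M_n\otimes\Z_{p^a})^{G}$ depends only on the image $\rho_a(G)\subseteq SL_2(\Z_{p^a})$: indeed $g\cdot v=\rho_a(g)\cdot v$, so $v$ is $G$-fixed if and only if it is $\rho_a(G)$-fixed.

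It therefore suffices to prove $\rho_a(\Gamma(m))\supseteq\rho_a(\SL)$. Here Proposition \ref{p:surj} applies directly, taking the modulus to be $p^{a}>1$: the matrices $s_1^{m}$ and $s_2^{m}$ are unipotent with $s_1^{m}\equiv s_2^{m}\equiv I\pmod m$, hence lie in $\Gamma(m)$, and their exponent $m$ is coprime to $p^{a}$ because $p\nmid m$. Proposition \ref{p:surj} then gives $\rho_a(\Gamma(m))=SL_2(\Z_{p^a})\supseteq\rho_a(\SL)$, so in fact $\rho_a(\Gamma(m))=\rho_a(\SL)$, and combining with the previous paragraph yields
$$H^0(\Gamma(m);M_n\otimes\Z_{p^a})=(M_n\otimes\Z_{p^a})^{\rho_a(\SL)}=H^0(\SL;M_n\otimes\Z_{p^a}).$$
There is no real obstacle in this argument: the only points needing care are the routine checks that reduction mod $p^a$ is compatible with taking symmetric powers (so that the $\SL$-action genuinely descends to $SL_2(\Z_{p^a})$) and that $s_1^{m},s_2^{m}$ lie in $\Gamma(m)$; once these are in place the conclusion is immediate from Proposition \ref{p:surj}.
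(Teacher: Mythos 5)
Your proof is correct and follows essentially the same route as the paper: both reduce to showing that the action factors through $SL_2(\Z_{p^a})$ and then apply Proposition \ref{p:surj} to $s_1^{m}, s_2^{m}\in\Gamma(m)$, using that $m$ is coprime to $p^a$, to conclude that $\Gamma(m)$ and $\SL$ have the same invariants. Your write-up simply makes explicit the details (the identification with symmetric powers, the degenerate case $a=0$) that the paper leaves implicit.
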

\begin{proof}
The result follows from proposition \ref{p:surj} applied to
$s_1^{m}, s_2^{m} \in \Gamma(m)$, since
$$
H^0(\Gamma(m);M_n \otimes \Z_{p^a}) = (\Z_{p^a}[x,y])^{\Gamma(m)} = (\Z_{p^a}[x,y])^{\SL} = H^0(\SL;M_n \otimes \Z_{p^a}).
$$

\vspace{-\baselineskip}
\end{proof}

We need to recall the following result:
\begin{prop}[\cite{steinberg}, Theorem H and following remarks] \label{stei}
Let $p$ be a prime number and let $a, b$ be positive integers, $a \leq b$.  
The module $H^0(\Gamma(p^a);M \otimes \Z_{p^b})$ is generated by
polynomials of the form
$$c P(x^d,y^d)$$ where $c$ is a positive integer such that the power 
$p^{b-a}$ divides the product $c d$.
\end{prop}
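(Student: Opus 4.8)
The plan is to identify $H^0(\Gamma(p^a);M\otimes\Z_{p^b})$ with the invariant ring $\Z_{p^b}[x,y]^{\Gamma(p^a)}$ and to prove the two inclusions against the stated generating set separately. First I would record an elementary reformulation: the submodule generated by all polynomials $c\,P(x^d,y^d)$ with $p^{b-a}\mid cd$ equals the ``monomial'' submodule $N:=\bigoplus_{m,n\geq 0}p^{\max(0,\,b-a-v_p(\gcd(m,n)))}\,\Z_{p^b}\cdot x^m y^n$, where $v_p$ is the $p$-adic valuation. Indeed a single monomial $c\,x^m y^n$ has the stated form precisely when $v_p(c)+v_p(\gcd(m,n))\geq b-a$, since one may always take $d=p^{\min(v_p(\gcd(m,n)),\,b-a)}$, and conversely every monomial of a $c\,P(x^d,y^d)$ with $p^{b-a}\mid cd$ satisfies this inequality. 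So the task reduces to showing $N=\Z_{p^b}[x,y]^{\Gamma(p^a)}$.

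For the inclusion $N\subseteq\Z_{p^b}[x,y]^{\Gamma(p^a)}$ I would use that every $g\in\Gamma(p^a)$ acts on $M$ through an integral matrix congruent to the identity modulo $p^a$, so that $x\mapsto x+p^a\ell_1$ and $y\mapsto y+p^a\ell_2$ with $\ell_1,\ell_2$ integral linear forms. Expanding $g(c\,x^m y^n)=c\,(x+p^a\ell_1)^m(y+p^a\ell_2)^n$ by the binomial theorem, every summand other than $c\,x^m y^n$ carries a factor $p^{a(i+j)}\binom{m}{i}\binom{n}{j}$ with $i+j\geq 1$. Using $v_p\bigl(\binom{r}{i}\bigr)\geq v_p(r)-v_p(i)$, the inequalities $v_p(\gcd(m,n))\leq v_p(m),\,v_p(n)$, and the elementary bound $v_p(k)\leq k-1$ for $k\geq 1$ (hence $a(k-1)\geq v_p(k)$), a short case analysis according to whether $i$ or $j$ vanishes shows that each such summand has valuation at least $v_p(c)+v_p(\gcd(m,n))+a\geq b$. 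Thus $g$ fixes $c\,x^m y^n$ modulo $p^b$, hence fixes all of $N$.

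For the reverse inclusion let $f=\sum f_{m,n}x^m y^n$ be $\Gamma(p^a)$-invariant; in particular it is fixed by $s_1^{p^a}$ and by $s_2^{p^a}$, which lie in $\Gamma(p^a)$. Comparing the coefficient of $x^M y^N$ in $f(x-p^a y,\,y)\equiv f(x,y)\pmod{p^b}$ gives, for all $M\geq 0$ and $N\geq 1$, the relation $\sum_{k=1}^{N}(-1)^k\binom{M+k}{k}p^{ak}f_{M+k,\,N-k}\equiv 0\pmod{p^b}$, whose $k=1$ term is $-(M+1)p^a f_{M+1,N-1}$. I would prove by induction on the second exponent $n$ that $v_p(f_{m,n})\geq b-a-v_p(m)$ for all $m,n$: for $n=0$ the relation with $N=1$ reads $(M+1)p^a f_{M+1,0}\equiv 0\pmod{p^b}$, which is the claim; for $n\geq 1$ take $M=m-1$, $N=n+1$, and note that the $k\geq 2$ terms involve second exponents $N-k<n$, so the inductive hypothesis applies to them, and the valuation estimate of the previous step (now with the factor $v_p(M+k)$ in $v_p\bigl(\binom{M+k}{k}\bigr)$ cancelling the one in the inductive bound $v_p(f_{M+k,N-k})\geq b-a-v_p(M+k)$) shows each of them lies in $p^b\Z_{p^b}$; hence so does the $k=1$ term, yielding $v_p(f_{m,n})\geq b-a-v_p(m)$. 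The mirror argument for $s_2^{p^a}$ gives $v_p(f_{m,n})\geq b-a-v_p(n)$, and combining the two bounds gives $v_p(f_{m,n})\geq b-a-v_p(\gcd(m,n))$, i.e.\ $f\in N$.

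The step I expect to be the main obstacle is the $p$-adic bookkeeping in this second inclusion: the induction must be set up so that the lower-order ($k\geq 2$) terms of the $s_1^{p^a}$-relation are controlled with one full power of $p$ to spare, which is exactly why one carries the sharp bound $v_p(f_{m,n})\geq b-a-v_p(m)$ through the induction rather than, say, reducing first modulo $p^{b-1}$ and lifting. This is also what keeps the argument uniform in $p$, the $p=2$ case included, the only arithmetic input being $v_p(k)\leq k-1$. Once this is in place, passing between the description $N$ and the form ``$c\,P(x^d,y^d)$ with $p^{b-a}\mid cd$'' in which the proposition is phrased is the routine reformulation of the first paragraph.
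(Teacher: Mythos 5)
The paper gives no proof of this proposition at all: it is quoted from Steinberg (``Theorem H and following remarks''), so there is no in-text argument to compare yours with step by step. Your proposal is, as far as I can check, a correct and self-contained elementary proof, and it is therefore a genuinely different (and more informative) route than the paper's appeal to the literature. Your reformulation of the generating set as the monomial module $N=\bigoplus_{m,n} p^{\max(0,\,b-a-v_p(\gcd(m,n)))}\Z_{p^b}\,x^m y^n$ is the right reading of the statement; the inclusion $N\subseteq (M\otimes\Z_{p^b})^{\Gamma(p^a)}$ follows from your binomial estimate exactly as you indicate (using $v_p\binom{r}{i}\geq v_p(r)-v_p(i)$ and $a(i+j-1)\geq v_p(i),v_p(j)$); and your reverse inclusion in fact proves more than is asserted, namely that invariance under the two elements $s_1^{p^a},s_2^{p^a}$ alone already forces membership in $N$, which together with the first inclusion pins down the $\Gamma(p^a)$-invariants even though these two elements need not generate $\Gamma(p^a)$. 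The induction is sound: in the step the $k\geq 2$ terms have valuation at least $\bigl(v_p(M+k)-v_p(k)\bigr)+ak+\bigl(b-a-v_p(M+k)\bigr)=b+a(k-1)-v_p(k)\geq b$, and the only point that deserves an explicit word is the degenerate case $b-a-v_p(M+k)<0$, where the inductive bound only gives nonnegativity; this is harmless, since the inequality $v_p(f_{M+k,N-k})\geq b-a-v_p(M+k)$ then holds trivially and the same chain of inequalities goes through verbatim. What your approach buys is independence from the reference and an explicit description of the invariant module (the module $N$), which is actually the form in which the result is used in Proposition \ref{p:stei_gen} and Theorem \ref{H_1gamman}; what the paper's citation buys is brevity and the greater generality of Steinberg's results.
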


We will need a slightly generalized version of proposition \ref{stei}:
\begin{prop}\label{p:stei_gen}
Let $p$ be a prime number and let $a, b$ be positive integers, $a \leq b$.  
Let $m$ be a positive integer and suppose $p^a \mid m$ and $p^{a+1} \nmid m$.
The module $H^0(\Gamma(m);M \otimes \Z_{p^b})$ is generated by
polynomials of the form
$$c P(x^d,y^d)$$ where $c$ is a positive integer such that the power 
$p^{b-a}$ divides the product $c d$.
\end{prop}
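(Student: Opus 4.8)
The plan is to reduce Proposition~\ref{p:stei_gen} to the already-quoted Proposition~\ref{stei} by exploiting the fact that $\Gamma(m)$ and $\Gamma(p^a)$ have the same image under reduction mod a power of $p$. Write $m = p^a q$ with $\gcd(p,q)=1$. First I would observe that $H^0(\Gamma(m); M\otimes \Z_{p^b}) = (\Z_{p^b}[x,y])^{\Gamma(m)}$, and that since $p^b$ is a power of $p$ only the image of $\Gamma(m)$ in $SL_2(\Z_{p^b})$ matters for computing the invariants. So the key point is the group-theoretic claim that $\Gamma(m)$ and $\Gamma(p^a)$ have the same image in $SL_2(\Z_{p^b})$; granting this, the two invariant submodules coincide and the conclusion is immediate from Proposition~\ref{stei}.

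To prove the group-theoretic claim I would argue in both directions. The inclusion $\Gamma(m)\subseteq \Gamma(p^a)$ (since $p^a\mid m$) gives one containment of images for free. For the reverse, I would use Proposition~\ref{p:han}: the reduction $SL_2(\Z_m)\cong SL_2(\Z_{p^a})\times SL_2(\Z_q)$, so an element of $\Gamma(p^a)$ — i.e.\ a matrix in $\SL$ trivial mod $p^a$ — can be adjusted by an element of $\Gamma(m)$ to also become trivial mod $q$ without changing its class mod $p^a$ (equivalently mod $p^b$, as we can apply the same splitting argument at level $p^b q'$ for a suitable $q'$ coprime to $p$). Concretely, given $g\in\Gamma(p^a)$, surjectivity of $\SL\to SL_2(\Z_m)$ lets us pick $h\in\SL$ with $h\equiv g \pmod{p^a}$ and $h\equiv I\pmod q$; then $h\in\Gamma(q)$ has image in $SL_2(\Z_{p^b})$ congruent to that of $g$, but one still needs $h\in\Gamma(m)$, which requires $h\equiv I\pmod{p^a}$ as well, so instead one takes $g^{-1}h\in\Gamma(p^a)$ and notes... — the cleanest formulation is: choose $h\equiv I\pmod q$ and $h\equiv I \pmod{p^b}$ is impossible if $h$ is to match $g$, so rather I would directly produce, for each $g\in\Gamma(p^a)$, an element $g'\in\Gamma(m)$ with $g'\equiv g\pmod{p^b}$ by solving the two independent congruences $g'\equiv g\pmod{p^b}$ and $g'\equiv I\pmod q$ via surjectivity of $\SL\to SL_2(\Z_{p^b q})$ together with the product decomposition.

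The main obstacle is precisely this last point: one must be careful that the mod-$p^b$ information needed is genuinely independent of the mod-$q$ information, and that lifting a pair of prescribed reductions to an honest element of $\SL$ is possible. This is exactly what Proposition~\ref{p:han} (Chinese Remainder Theorem for $SL_2$) together with surjectivity of $\SL\to SL_2(\Z_{p^b q})$ provides, so the argument goes through, but the bookkeeping of which modulus controls which factor is where an error could slip in. Once the equality of images in $SL_2(\Z_{p^b})$ is established, we get $(\Z_{p^b}[x,y])^{\Gamma(m)} = (\Z_{p^b}[x,y])^{\Gamma(p^a)}$, and Proposition~\ref{stei} identifies the latter as generated by polynomials $cP(x^d,y^d)$ with $p^{b-a}\mid cd$, completing the proof.
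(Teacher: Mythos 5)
Your proof is correct, and the reduction is the same one the paper makes: everything hinges on showing that $\Gamma(m)$ and $\Gamma(p^a)$ have the same image in $SL_2(\Z_{p^b})$, after which the invariants coincide and Proposition~\ref{stei} finishes the job. Where you genuinely diverge is in how you establish that equality of images. The paper argues by counting: it observes that the two projections factor through the finite quotients $\Gamma(m)/\Gamma(p^{b-a}m)$ and $\Gamma(p^a)/\Gamma(p^b)$, that the inclusion $\Gamma(m)\subset\Gamma(p^a)$ induces an injection of the first quotient into the second (using $\Gamma(p^b)\cap\Gamma(m)=\Gamma(p^{b-a}m)$), and then shows this injection is an isomorphism because both quotients have the same order --- an index computation that uses Proposition~\ref{p:han} to split $|SL_2(\Z_{p^{b-a}m})|/|SL_2(\Z_m)|$ as $|SL_2(\Z_{p^b})|/|SL_2(\Z_{p^a})|$. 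You instead lift elements directly: given $g\in\Gamma(p^a)$, you solve $g'\equiv g\pmod{p^b}$ and $g'\equiv I\pmod q$ (with $m=p^aq$, $\gcd(p,q)=1$) using surjectivity of $\SL\to SL_2(\Z_{p^bq})$ and the product decomposition of Proposition~\ref{p:han}; since $b\geq a$ and $g\equiv I\pmod{p^a}$, such a $g'$ lies in $\Gamma(p^a)\cap\Gamma(q)=\Gamma(m)$ and has the same image as $g$ mod $p^b$. After the visible false starts in your middle paragraph, this final formulation is the right one and the bookkeeping does check out: the two congruence conditions live in coprime moduli, so they are independent, and the only facts needed are exactly the two you cite. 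Your version is arguably more elementary and constructive (no order formulas for $SL_2(\Z_n)$ beyond what CRT gives), while the paper's counting argument packages the same content without exhibiting explicit lifts; both ultimately rest on surjectivity of the reduction maps and on Proposition~\ref{p:han}.
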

\begin{proof}
Our aim is to prove that 
$$H^0(\Gamma(m);M \otimes \Z_{p^b}) = H^0(\Gamma(p^a);M \otimes \Z_{p^b})$$
so that we can reduce our result to the previous proposition.
The equality above is equivalent to
$$ (M \otimes \Z_{p^b})^{\Gamma(m)} = (M \otimes \Z_{p^b})^{\Gamma(p^a)}.
$$
Since the action of $\Gamma(m)$ and $\Gamma(p^a)$ on the module $M \otimes \Z_{p^b}$ 
factors through the action of the group $SL_2(\Z_{p^b})$ 
we need to show that the groups $\Gamma(m)$ and $\Gamma(p^a)$
have the same image with respect to the projection  $\SL \to SL_2(\Z_{p^b})$.

Let us consider the following commutative diagram:
\begin{center}
\begin{tabular}{l}
\xymatrix @R=1.5pc @C=1.5pc {
\Gamma(p^a) \imm & \SL \sur & SL_2(\Z_{p^b}) \\\ar[r]
\Gamma(m) \imm \imm[u] & \SL \sur \ar[u]_{\simeq} & SL_2(\Z_{p^{b-a}m}). \ar[u] \\
}
\end{tabular}
\end{center}

 
Since we assume $a<b$,
the projections $\Gamma(p^a) \to SL_2(\Z_{p^b})$ and $\Gamma(m) \to SL_2(\Z_{p^{b-a}m})$ factor through the quotients
\begin{center}
\begin{tabular}{lcl}
\xymatrix @R=1.5pc @C=1.5pc {
\Gamma(p^a) \ar[rr] \ar@{>>}[dr] & & SL_2(\Z_{p^b}) \\
& \Gamma(p^a)/\Gamma(p^b) \ar[ur] & \\
} & \mbox{and} & 
\xymatrix @R=1.5pc @C=1.5pc {
\Gamma(m) \ar[rr] \ar@{>>}[dr] & & SL_2(\Z_{p^{b-a}m}) \\
& \Gamma(m)/\Gamma(p^{b-a}m) \ar[ur] & \\
}
\end{tabular}
\end{center}
and finally, since $\Gamma(p^b) \cap \Gamma(m) = \Gamma(p^{b-a}m)$, the inclusion $\Gamma(m) \into \Gamma(p^a)$
induces the inclusion $$\Gamma(m)/\Gamma(p^{b-a}m) \into \Gamma(p^a)/\Gamma(p^b).$$
The result follows since $$\mid \! \Gamma(m)/\Gamma(p^{b-a}m) \! \mid = 
\frac{\mid \! SL_2(\Z_{p^{b-a}m}) \! \mid}{\mid \! SL_2(\Z_m) \! \mid} =
\frac{\mid \! SL_2(\Z_{p^b}) \! \mid}{\mid \! SL_2{\Z_{p^a}}\! \mid} = \mid \! \Gamma(p^a)/\Gamma(p^b) \! \mid
$$
(the equality in the middle is a consequence of proposition \ref{p:han}) and hence we have the following commutative diagram
\begin{center}
\begin{tabular}{l}
\xymatrix @R=1.5pc @C=1.5pc {
\Gamma(p^a)/\Gamma(p^b) \ar[r] & SL_2(\Z_{p^b}) \\
\Gamma(m)/\Gamma(p^{b-a}m) \ar[u]_{\simeq} \ar[r] & SL_2(\Z_{p^{b-a}m})\ar[u] \\
}
\end{tabular}
\end{center}
that implies that the groups $\Gamma(m)$ and $\Gamma(p^a)$ have the same image in $SL_2(\Z_{p^b})$.
\end{proof}

Let us consider the case $\Gamma(2)$. The group $\Gamma(2)$ is
the direct product of a free group $F_2$ generated
by the elements $$ s_1^2 = \left( \begin{array}{cc}
                    1 & 0 \\ -2 & 1
                   \end{array} \right)
\mbox{ and } s_2^2 =  \left( \begin{array}{cc}
                    1 & 2 \\ 0 & 1
                   \end{array} \right) $$
and the cyclic group $\Z_2$ generated by 
$$ w_2 = (s_1 s_2 s_2)^2 =  \left( \begin{array}{cc} -1 & 0 \\ 0 &
 -1 \end{array} \right). $$

\begin{prop} \label{p:proj2}
The map $\pi_2: \Gamma(2) \to \Z_2$
defined by $$
\Gamma(2) \ni A = \left( \!\! \begin{array}{cc}
                     \! a_{11} \!& \!a_{12}\! \\ \!a_{21}\! & \!a_{22}\!
                     \end{array} \!\!
 \right) \stackrel{\pi_2}{\mapsto} a_{11} \in \Z_4^* \simeq \Z_2.
$$
is the projection homomorphism onto the second component of the decomposition $\Gamma(2) = F_2 \times \Z_2$ described above.
\end{prop}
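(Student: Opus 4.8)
The plan is to check that $\pi_2$ is a well-defined group homomorphism with values in $\Z_4^*\simeq\Z_2$, to evaluate it on the three generators $s_1^2,s_2^2,w_2$ of $\Gamma(2)$, and then to conclude by comparing it with the second-factor projection $F_2\times\Z_2\to\Z_2$ on this generating set.

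First I would note that if $A=\left(\begin{smallmatrix}a_{11}&a_{12}\\a_{21}&a_{22}\end{smallmatrix}\right)\in\Gamma(2)$ then $A\equiv I\pmod 2$, so $a_{11}$ is odd and $a_{12},a_{21}$ are even; in particular the class of $a_{11}$ in $\Z_4$ lies in $\Z_4^*=\{1,3\}$, a cyclic group of order two, so $\pi_2$ is at least a well-defined map of sets. To see it is multiplicative, observe that for $A,B\in\Gamma(2)$ the $(1,1)$-entry of $AB$ equals $a_{11}b_{11}+a_{12}b_{21}$, and $a_{12}b_{21}$ is a product of two even integers, hence divisible by $4$; therefore $(AB)_{11}\equiv a_{11}b_{11}\pmod 4$ and $\pi_2(AB)=\pi_2(A)\pi_2(B)$. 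This is the one place where the choice of modulus matters: reducing $a_{11}$ only mod $2$ would give the trivial map, and it is precisely the divisibility of $a_{12}b_{21}$ by $4$ that makes the refinement to $\Z_4$ work.

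Next I would read off the values on the generators given just above the statement. For $s_1^2$ and $s_2^2$ the $(1,1)$-entry equals $1$, so $\pi_2(s_1^2)=\pi_2(s_2^2)=1$, whence $\pi_2$ kills the free subgroup $F_2=\langle s_1^2,s_2^2\rangle$. For $w_2=(s_1s_2s_2)^2$ the $(1,1)$-entry equals $-1$, whose class mod $4$ is $3$, the generator of $\Z_4^*$, so $\pi_2(w_2)$ generates $\Z_2$. Consequently $\pi_2$ and the second-factor projection $F_2\times\Z_2\to\Z_2$ are two homomorphisms $\Gamma(2)\to\Z_2$ agreeing on the generating set $\{s_1^2,s_2^2,w_2\}$ of $\Gamma(2)$, hence they coincide; equivalently, $\ker\pi_2\supseteq F_2$ and $\pi_2$ is surjective onto a group of order $2=[\Gamma(2):F_2]$, which forces $\ker\pi_2=F_2$ and identifies $\pi_2$ with the projection. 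There is no real obstacle here beyond keeping track of the modulus $4$ versus $2$ in the multiplicativity check.
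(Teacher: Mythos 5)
Your proposal is correct and follows essentially the same route as the paper: the key step in both is that for $A,B\in\Gamma(2)$ the off-diagonal entries are even, so $a_{12}b_{21}\equiv 0\pmod 4$ and $(AB)_{11}\equiv a_{11}b_{11}\pmod 4$, giving multiplicativity into $\Z_4^*$. You merely spell out the final identification with the second-factor projection (evaluation on $s_1^2,s_2^2,w_2$) which the paper leaves as "easy to verify."
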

\begin{proof}
Since the matrix $A=(a_{ij})$ is in $\Gamma(2)$ we have that $a_{11}$ is invertible $\mod 4$. 
We verify that the map defined in the proposition is a group homomorphism: in fact given $A= \left( \!\! \begin{array}{cc} \!a_{11}\! &\! a_{12}\! \\\! a_{21}\! &\! a_{22} \!\end{array} \!\! \right)$ and $B= \left( \!\! \begin{array}{cc} \!b_{11}\! & \!b_{12}\! \\ \!b_{21}\! &\! b_{22} \!\end{array} \!\! \right)$ two elements in $\Gamma(2)$ we have that 
$
a_{11}b_{11} \equiv a_{11}b_{11} + a_{12}b_{21} \mod 4
$
and hence 
$
\pi_2(A)\pi_2(B) = \pi_2(A\cdot B).
$
Finally it is easy to verify that $F_2 = \ker \pi_2$.
\end{proof}

We obtain a resolution for a $\Gamma(2)$-module $N$ as in the 
following diagram taking the product of the standard periodic 
resolution for $\Z_2$ (horizontal lines) and the minimal resolution 
for $F_2$ (vertical lines):

\begin{equation}\label{resolution}
\begin{gathered}
\xymatrix @R=1.5pc @C=1.5pc {
N \oplus N\ar[r]^{w_2-I} & N \oplus N \ar[r]^{w_2+I}& N \oplus N \ar[r]^{w_2-I}& N \oplus N \ar[r]^{w_2+I}& \cdots \\
N \ar[u]^{(s_1^2-I,s_2^2-I)} \ar[r]_{w_2-I}& N \ar[u]^{(s_1^2-I,s_2^2-I)} \ar[r]_{w_2+I}& N \ar[u]^{(s_1^2-I,s_2^2-I)} \ar[r]_{w_2-I}& N \ar[u]^{(s_1^2-I,s_2^2-I)} \ar[r]_{w_2+I}& \cdots
} \\
\end{gathered}
\end{equation}
Recall that the generator
$w_2$ acts trivially on even degree polynomials, while it acts as $(-1)$-multiplication on odd degree polynomials. 
Applying this observation to the previous diagram we get the following result:
\begin{prop} \label{p:Gamma2} Let $F_2$ be the subgroup of $\SL$ freely generated by $s_1^2, s_2^2$.
The following isomorphisms hold. For even $n$:
$$
H^0(\Gamma(2); M_n) = H^0(F_2; M_n)= \left\{ \begin{array}{l} \Z
  \mbox{ if $n=0;$}\\ 
0 \mbox{ if $n>0;$}\end{array}\right.
$$
$$
H^1(\Gamma(2); M_n) = H^1(F_2; M_n),
$$
and for $i>0$
$$ 
H^{2i}(\Gamma(2); M_n) = H^0(F_2; M_n \otimes \Z_2) = M_n \otimes \Z_2,
$$
$$ 
H^{2i+1}(\Gamma(2); M_n) = H^1(F_2; M_n) \otimes \Z_2.
$$
For odd $n$ 
$$
H^0(\Gamma(2);M_n) = H^0(F_2; M_n) = 0,
$$
and for $i>0$
$$
H^{2i-1}(\Gamma(2); M_n) = H^0(F_2; M_n \otimes \Z_2) = M_n \otimes \Z_2,
$$
$$ 
H^{2i}(\Gamma(2); M_n) = H^1(F_2; M_n) \otimes \Z_2 = H^1(F_2;M_n \otimes \Z_2).
$$
Moreover for any $n$ we have
$$
 H^1(F_2;M_n \otimes \Z_2) = (M_n \oplus M_n) \otimes \Z_2.
$$
%
\end{prop}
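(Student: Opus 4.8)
The plan is to read $H^*(\Gamma(2);M_n)$ off the total complex of the double complex (\ref{resolution}), using two elementary facts about the module $M$. First, since $w_2=-I$, the generator $w_2$ acts on $M_n$ as multiplication by $(-1)^n$; hence the horizontal differentials $w_2\mp I$ of (\ref{resolution}), restricted to $M_n$, become the maps $0$ and multiplication by $2$, alternating, when $n$ is even, and multiplication by $-2$ and $0$, alternating, when $n$ is odd. Second, the given formulas give $s_1^2\colon x\mapsto x-2y,\ y\mapsto y$ and $s_2^2\colon x\mapsto x,\ y\mapsto 2x+y$, so $s_1^2\equiv s_2^2\equiv I\pmod{2}$; thus $F_2$ acts trivially on $M\otimes\Z_2$ and, crucially, $(s_i^2-I)(M_n)\subseteq 2M_n$, so the map $\delta:=(s_1^2-I,\,s_2^2-I)\colon M_n\to M_n\oplus M_n$ takes values in $2(M_n\oplus M_n)$ and, $M_n$ being $\Z$-torsion-free, every element of its image is uniquely $2$-divisible there. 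Finally, a direct computation with polynomials gives $M_n^{\langle s_1^2\rangle}=\Z\,y^n$ and $M_n^{\langle s_2^2\rangle}=\Z\,x^n$ (a homogeneous degree-$n$ polynomial invariant under $x\mapsto x-2y$ cannot depend on $x$), so $H^0(F_2;M_n)=M_n^{F_2}=\Z\,y^n\cap\Z\,x^n$ equals $\Z$ if $n=0$ and $0$ if $n>0$.

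With these facts, let $T^\bullet$ be the total complex of (\ref{resolution}): $T^0=M_n$ and $T^k=M_n\oplus(M_n\oplus M_n)$ for $k\ge 1$, the differential being the signed sum of the two differentials of the diagram. The proof is then a degree-by-degree determination of cycles and boundaries. For $n$ even, the differential out of $T^0$ is $a\mapsto(0,\delta a)$, so $H^0(\Gamma(2);M_n)=\ker\delta=H^0(F_2;M_n)$; in degree $1$ one obtains $H^1(\Gamma(2);M_n)=(M_n\oplus M_n)/\delta(M_n)=H^1(F_2;M_n)$; and for $i\ge 1$, using that the image of $\delta$ lies in $2(M_n\oplus M_n)$ and is uniquely $2$-divisible, one checks that $H^{2i}(\Gamma(2);M_n)\cong M_n/2M_n=M_n\otimes\Z_2$ and $H^{2i+1}(\Gamma(2);M_n)\cong(M_n\oplus M_n)/2(M_n\oplus M_n)=(M_n\oplus M_n)\otimes\Z_2$. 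For $n$ odd the same bookkeeping applies verbatim, except that the two horizontal maps $w_2-I$ (now multiplication by $-2$) and $w_2+I$ (now $0$) occupy the opposite parities of horizontal degree; this shifts the whole pattern by one, so that $H^0(\Gamma(2);M_n)=0$ and, for $i\ge 1$, $H^{2i-1}(\Gamma(2);M_n)=M_n\otimes\Z_2$ and $H^{2i}(\Gamma(2);M_n)=(M_n\oplus M_n)\otimes\Z_2$.

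It remains to identify these groups with the claimed $F_2$-cohomology groups. Since $F_2$ acts trivially on $M\otimes\Z_2$ and is free on two generators, $H^0(F_2;M_n\otimes\Z_2)=M_n\otimes\Z_2$ and $H^1(F_2;M_n\otimes\Z_2)=(M_n\oplus M_n)\otimes\Z_2$ (equivalently, the differential $\delta$ of the minimal $F_2$-resolution reduces to $0$ modulo $2$), which is the final assertion of the proposition. And applying $H^*(F_2;-)$ to the short exact sequence $0\to M_n\xrightarrow{2}M_n\to M_n\otimes\Z_2\to 0$, together with $H^2(F_2;-)=0$ since $F_2$ is free, yields $H^1(F_2;M_n)\otimes\Z_2\cong H^1(F_2;M_n\otimes\Z_2)$, reconciling the two descriptions of $H^{2i}(\Gamma(2);M_n)$ for $n$ odd and of $H^{2i+1}(\Gamma(2);M_n)$ for $n$ even. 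The only point requiring real care is the sign-and-index bookkeeping in $T^\bullet$: it is precisely because the multiplications by $2$ sit in odd horizontal degree when $n$ is even but in even horizontal degree when $n$ is odd that one gets the one-step shift between the two parities of $n$, as well as the exceptional behaviour of $H^1(\Gamma(2);M_n)$ compared with the higher odd-degree groups.
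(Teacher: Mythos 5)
Your proof is correct and follows essentially the same route as the paper: the paper's own (one-line) proof is precisely ``an analysis of the resolution (\ref{resolution})'' together with the observation that $F_2$ acts trivially on $M\otimes\Z_2$, and your argument simply carries out that analysis of the total complex in detail, including the computation of $M_n^{F_2}$ and the parity bookkeeping coming from $w_2$ acting as $(-1)^n$. No gaps.
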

\begin{proof}
The proposition follows from an analysis of the resolution of $\Gamma(2)$ given in (\ref{resolution}). The last statement about the cohomology of $F_2$ follows since the action of the group $F_2$ on the module $M \otimes \Z_2$ is trivial.
\end{proof}

%
%
\begin{prop} \label{p:rankH}
The group $H^0(\Gamma(m);M_n)$ is isomorphic to $M_0$ for $n = 0$ and
is trivial for $n > 0$.
Let $p>2$ be a prime number. The group $H^1(\Gamma(p);M_n \otimes \Q)$
has rank $1+p(p^2-1)/12$ for $n=0$ and $(n+1)p(p^2-1)/12$ for
$n>0$. For $p=2$ we have the isomorphism $H^1(\Gamma(2); M_n \otimes
\Q)= \Q^2,$ concentrated in degree $0$.
\end{prop}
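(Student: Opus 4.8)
The plan is to reduce both assertions to the cohomology of a free group, which is pinned down by an Euler--Poincar\'e count once the rank is known, after disposing of the degree-zero invariants by hand.

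\emph{The $H^0$ statement.} Since $s_1^{m},s_2^{m}\in\Gamma(m)$ and $H^0(\Gamma(m);M_n)=(M_n)^{\Gamma(m)}$, it is enough to show that a homogeneous polynomial of positive degree fixed by these two elements must vanish. The matrix $s_1^{m}$ acts by $x\mapsto x-my,\ y\mapsto y$, so a polynomial fixed by all of its powers cannot involve $x$, hence is a scalar multiple of $y^{n}$; and $s_2^{m}$ acts by $x\mapsto x,\ y\mapsto mx+y$, so $c\,y^{n}$ is fixed only when $n=0$. Thus $H^0(\Gamma(m);M_n)$ equals $M_0=\Z$ for $n=0$ and $0$ for $n>0$; since $M_n$ is a finitely generated free abelian group and invariants commute with the flat extension $\Z\to\Q$, the same holds after $\otimes\,\Q$.

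\emph{The case $p>2$.} By the discussion at the beginning of Section \ref{Principal congruence subgroups}, $\Gamma(p)$ is a free group of some finite rank $r$, so the trivial module admits the standard resolution $0\to\Z[\Gamma(p)]^{r}\to\Z[\Gamma(p)]\to\Z\to0$. Writing $V=M_n\otimes\Q$, of dimension $n+1$, and applying $\Hom_{\Z[\Gamma(p)]}(-,V)$ gives the exact sequence
\[
0\to H^0(\Gamma(p);V)\to V\to V^{r}\to H^1(\Gamma(p);V)\to 0 ,
\]
so $\dim_\Q H^1(\Gamma(p);V)=\dim_\Q H^0(\Gamma(p);V)+(r-1)(n+1)$; with the previous paragraph this is $r$ for $n=0$ and $(r-1)(n+1)$ for $n>0$, and it remains only to check $r-1=p(p^{2}-1)/12$. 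For this, note that $-I\notin\Gamma(p)$ since $p>2$, so $\Gamma(p)$ maps isomorphically onto $\overline{\Gamma}(p)=\ker(PSL_2(\Z)\to PSL_2(\Z_p))$, a subgroup of index $|PSL_2(\Z_p)|=\tfrac12 p(p^{2}-1)$. From $PSL_2(\Z)\cong\Z_2*\Z_3$ one has the rational Euler characteristic $\chi(PSL_2(\Z))=\tfrac12+\tfrac13-1=-\tfrac16$, hence $\chi(\Gamma(p))=\chi(\overline{\Gamma}(p))=-p(p^{2}-1)/12$; since a free group of rank $r$ has Euler characteristic $1-r$, we get $r=1+p(p^{2}-1)/12$ (equivalently, $r$ is the first Betti number of the open modular curve $Y(p)$). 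Substituting yields the stated ranks.

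\emph{The case $p=2$ and where the work is.} Here $\Gamma(2)=F_2\times\Z_2$ with $F_2=\langle s_1^{2},s_2^{2}\rangle$ free of rank $2$ and $\Z_2=\langle -I\rangle$ central. Rational cohomology of $\Z_2$ is concentrated in degree zero, so for $n=0$ the K\"unneth theorem gives $H^1(\Gamma(2);\Q)\cong H^1(F_2;\Q)\cong\Q^{2}$, which is the asserted isomorphism; moreover by Proposition \ref{p:Gamma2} the higher cohomology groups of $\Gamma(2)$ are $2$-torsion, so rationally everything sits in degrees $0$ and $1$. Everything above is formal once one grants that $\Gamma(p)$ $(p>2)$ is free and that the $\Z_2$-factor of $\Gamma(2)$ is rationally invisible; the one genuinely arithmetic input is the rank $r=1+[PSL_2(\Z):\overline{\Gamma}(p)]/6$, i.e. the Euler-characteristic (Gauss--Bonnet) count for the modular curve, and the only place to be careful is in carrying the $H^0$-term correctly through the Euler--Poincar\'e identity.
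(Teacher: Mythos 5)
Your argument follows the paper's proof in its essentials: the $H^0$ computation from $s_1^m,s_2^m\in\Gamma(m)$, and, for $p>2$, the dimension count coming from the length-one free resolution of the free group $\Gamma(p)$, are exactly the paper's steps. The one genuine difference is how you obtain the rank $r$ of $\Gamma(p)$: the paper simply cites Frasch for $N(p)=1+p(p^2-1)/12$, whereas you derive it from multiplicativity of the rational Euler characteristic applied to $\overline{\Gamma}(p)\leq PSL_2(\Z)\cong\Z_2*\Z_3$, together with $-I\notin\Gamma(p)$. That substitution is correct and makes the proof self-contained (it silently uses surjectivity of $\SL\to SL_2(\Z_p)$ to compute the index, which is Proposition~\ref{p:surj}), and the bookkeeping of the $H^0$-term through the Euler--Poincar\'e identity is done correctly.

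The weak point is the case $p=2$. You verify only $H^1(\Gamma(2);M_0\otimes\Q)\cong\Q^2$ and then remark that the higher \emph{cohomological} degrees are torsion; but the clause ``concentrated in degree $0$'' in the statement refers to the polynomial degree $n$, i.e.\ it asserts $H^1(\Gamma(2);M_n\otimes\Q)=0$ for $n>0$, and your proof never addresses this. Moreover the gap cannot be closed as stated: your own Euler--Poincar\'e identity applied to $F_2=\langle s_1^2,s_2^2\rangle$ (a free group of rank $2$, acting on $M_n\otimes\Q$ with no invariants for $n>0$), combined with $H^1(\Gamma(2);M_n\otimes\Q)=H^1(F_2;M_n\otimes\Q)$ for even $n$ from Proposition~\ref{p:Gamma2}, gives $\dim_\Q H^1(\Gamma(2);M_n\otimes\Q)=n+1$ for even $n>0$ (e.g.\ $\Q^3$ for $n=2$, matching the Eichler--Shimura count for $\Gamma(2)$), not $0$. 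So either the $p=2$ clause must be read as a claim about $n=0$ only, with the even degrees $n>0$ contributing $\Q^{n+1}$ and the odd degrees $0$ (since $-I$ acts by $-1$ there), or the statement itself needs correcting. You should flag this explicitly rather than pass over it.
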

\begin{proof}
The first statement is trivial, since we have that 
$s_1^m, s_2^m \in \Gamma(m)$.
The second statement follows from a result of Frasch (\cite{frasch}): 
for $p>2$ the group $\Gamma(p)$ is free on
$N(p)=1+p(p^2-1)/12$ generators.
We apply this result using the minimal resolution of the free group
$\Gamma(p)$. The computation
of the rank follows since the submodule of invariants in $M_n$ is
trivial for $n>0$ and hence the differential $\delta_0: M_n \to
M_n^{N(p)}$ is injective.
The case $p=2$ can be easily checked separately by using the formulas
from proposition \ref{p:Gamma2}.
\end{proof}

\begin{rmk}
We can compute the rank of $\Gamma(m)$ as a free group for any $m$ ($m > 2$).
Let $m$ be an integer, $m>2$. Suppose that $m$ is not prime and let
$p$ be a prime that divides $m$.
Then we have that $\Gamma(m) \subset \Gamma(p)$. Let $i$ be the index 
$$
i = [\Gamma(p): \Gamma(m)] = \mid\! SL_2(\Z_m)\! \mid / \mid\!
SL_2(\Z_p)\! \mid.
$$
If $p \neq 2$ the group $\Gamma(p)$ is free and the Schreier index formula
(\cite{schreier}) gives us that $\Gamma(m)$ is a group of rank
$$
r = \mathrm{rk} \Gamma(m) = i (\mathrm{rk}  \Gamma(p)-1) +1.
$$
In the case $p=2$ the group $\Gamma(2)$ is not free but it contains a
rank-$2$ free subgroup $F_2$ of index $2$. 

If $m \equiv 0 \mod 4$ then $\pi_2(\Gamma(m))=1$ (see \ref{p:proj2}) so  
$\Gamma(m) \subset F_2$.
Hence the Schreier formula become:
$$
r = \mathrm{rk} \Gamma(m) = i/2 +1.
$$

Now let us consider the case $m \equiv 2 \mod 4$ we claim that $\Gamma(m)$ is not contained in $F_2$.
In fact we can consider the element $U_m := \left( \! \! \begin{array}{cc} \! m+1 \! & \! -m\! \\\! m\! & \!-m+1\! \end{array} \!\!\right) \in \Gamma(m)$ and we can see that $\pi_2(U_m) = -1 \mod 4$. In order to determine the rank 
of $\Gamma(m)$ one can consider the inclusion $\Gamma(m) \cap F_2 \subset F_2$, of index $i = [ \Gamma(2):\Gamma(m)]$ and the inclusion $\Gamma(m) \cap F_2 \subset \Gamma(m)$, of index $2$.
From the first inclusion Schreier formula tells us that $\Gamma(m) \cap F_2$ is a free group of rank $i+1$.
From the second inclusion Schreier formula gives
$$
\mathrm{rk} (\Gamma(m)\cap F_2) = 2 (\mathrm{rk} \Gamma(m) -1) + 1
$$
and then $\mathrm{rk} \Gamma(m) = i/2 +1.$

As a consequence we can use the same argument of Theorem \ref{p:rankH}
and compute the dimension of $H^1(\Gamma(m);M_n \otimes \Q)$ for any
$m$ using the following formulas: $$ \mid\! SL_2(\Z_p) \! \mid =
(p+1)(p-1)p$$ and (see for example \cite{han}) $$ \mid\! SL_2(\Z_{p^a}) \! \mid =
p^{(a-1)3}(p+1)p(p-1).$$
\end{rmk}
From the previous remark and from proposition \ref{p:han} we have the following generalization of 
proposition \ref{p:rankH}:
\begin{prop}
Let $m>2$ be an integer that factors as $m = p_1^{a_1} \cdots p_k^{a_k}$. The cardinality of $SL_2(\Z_m)$ is 
given by 
$$
d= \prod_i p_i^{(a_i-1)3}p_i(p_i^2-1)
$$
and if we define $i = \frac{d}{p_1(p_1^2-1)}$ then $\Gamma(m)$ is a free group of rank 
$$r = \left\{ \begin{array}{ll}
i/2 + 1 & \mbox{ if }p_1 = 2 \\
i(p_1(p_1^2-1)-1)+1  & \mbox{ if }p_1 > 2            \end{array}
\right.$$
The rank of the group $H^1(\Gamma(m); M_n \otimes \Q)$ is $r$, for $n=0$ and $(r-1)(n+1)$ for $n>0$.
\qed 
\end{prop}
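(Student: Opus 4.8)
The plan is to obtain the statement as an assembly of Proposition~\ref{p:han} with the computation carried out in the Remark above; the only genuinely new ingredient is the order of $SL_2(\Z_m)$, everything else being a bookkeeping exercise already performed there.

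First I would establish the formula for $d$. By Proposition~\ref{p:han} the Chinese Remainder decomposition gives $SL_2(\Z_m)\simeq\prod_i SL_2(\Z_{p_i^{a_i}})$, so $d=|SL_2(\Z_m)|=\prod_i|SL_2(\Z_{p_i^{a_i}})|$, and substituting $|SL_2(\Z_{p^a})|=p^{3(a-1)}p(p^2-1)$, recalled in the Remark, yields $d=\prod_i p_i^{3(a_i-1)}p_i(p_i^2-1)$. Since the reduction map $\SL\to SL_2(\Z_m)$ is surjective, as already used in Proposition~\ref{p:surj}, we have $[\SL:\Gamma(m)]=d$, and likewise $[\SL:\Gamma(p_1)]=|SL_2(\Z_{p_1})|=p_1(p_1^2-1)$. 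As $p_1\mid m$ we have $\Gamma(m)\subseteq\Gamma(p_1)$, hence $[\Gamma(p_1):\Gamma(m)]=d/(p_1(p_1^2-1))=i$, which in particular is an integer.

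Next I would read off $r=\mathrm{rk}\,\Gamma(m)$ from Schreier's index formula (\cite{schreier}), exactly along the lines of the Remark. When $p_1>2$ the group $\Gamma(p_1)$ is free (\cite{frasch}) of the rank given in Proposition~\ref{p:rankH}, and Schreier's formula applied to $\Gamma(m)\subseteq\Gamma(p_1)$ of index $i$ yields the stated value of $r$. When $p_1=2$ one works instead with $\Gamma(2)=F_2\times\Z_2$, $F_2$ free of rank $2$, and distinguishes $m\equiv 0$ from $m\equiv 2\bmod 4$: in the first case $\Gamma(m)\subseteq F_2$ of index $i/2$ by Proposition~\ref{p:proj2}, while in the second case the element $U_m$ of the Remark witnesses $\pi_2(\Gamma(m))=\Z_2$, so one applies Schreier's formula twice, to $\Gamma(m)\cap F_2\subseteq F_2$ of index $i$ and to $\Gamma(m)\cap F_2\subseteq\Gamma(m)$ of index $2$; in both sub-cases this gives $r=i/2+1$.

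Finally, for the cohomology with rational coefficients I would argue as in Proposition~\ref{p:rankH}. Since $\Gamma(m)$ is a free group of rank $r$, its minimal resolution produces a two-term cochain complex $M_n\otimes\Q\xrightarrow{\delta_0}(M_n\otimes\Q)^r$, so that $H^0(\Gamma(m);M_n\otimes\Q)=\ker\delta_0=(M_n\otimes\Q)^{\Gamma(m)}$ and $H^1(\Gamma(m);M_n\otimes\Q)=\coker\delta_0$, all higher cohomology vanishing. By the first assertion of Proposition~\ref{p:rankH} the invariants $(M_n\otimes\Q)^{\Gamma(m)}$ form a copy of $\Q$ when $n=0$ and vanish when $n>0$, so $\delta_0$ is injective for $n>0$; since $\dim_\Q(M_n\otimes\Q)=n+1$, a dimension count gives $\dim_\Q H^1=r(n+1)-(n+1)+\dim_\Q H^0$, that is $(r-1)(n+1)$ for $n>0$ and $r$ for $n=0$. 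The main---indeed the only---obstacle is the index and Schreier computation when $p_1=2$, where one must carefully track the interplay among $\Gamma(m)$, $F_2$ and $\Gamma(2)$; once the Remark and Proposition~\ref{p:han} are available, the rest is routine.
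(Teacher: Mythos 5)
Your argument is correct and is essentially the paper's own: the proposition is deduced, exactly as in the preceding Remark, from Proposition~\ref{p:han} together with $|SL_2(\Z_{p^a})|=p^{3(a-1)}p(p^2-1)$, the Schreier index formula applied to $\Gamma(m)\subseteq\Gamma(p_1)$ (with the separate analysis of $F_2\subset\Gamma(2)$ and the cases $m\equiv 0,2\bmod 4$ when $p_1=2$), and the dimension count of Proposition~\ref{p:rankH} for $H^1(\Gamma(m);M_n\otimes\Q)$. The only caveat is where you defer to ``the stated value of $r$'' for $p_1>2$: Schreier actually gives $r=i\cdot p_1(p_1^2-1)/12+1$ (since $\mathrm{rk}\,\Gamma(p_1)-1=p_1(p_1^2-1)/12$), not the displayed $i(p_1(p_1^2-1)-1)+1$ --- a discrepancy already present in the paper's statement rather than an error introduced by you.
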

\begin{rmk}
In the factorization of $m$ used in proposition \ref{p:rankH} the primes $p_i$'s need not be ordered, so any prime factor of $m$ can be used in the formula to compute the rank of $\Gamma(m)$.
\end{rmk}


We now consider the torsion part of the cohomology. We need to recall
the notion of {\it divided polynomial algebra}, as defined in \cite{b3cohom}. We present a 
slightly more general definition.


Let $\Q[x]$ be the ring of rational polynomials in one variable. One
defines the subring $$\Delta[x]:= \Delta_{\Z}[x] \subset \Q[x]$$  
as the subset generated, as a $\Z$-submodule, by the elements
$x_n:=\ \frac{x^n}{n!}$ for $n \in \N$.

\medskip
\noindent {\bf Notational remark.} 
The most frequent notation for this ring uses the letter $\Gamma$
instead of $\Delta;$ for obvious reasons, we decided to change it to
avoid confusion with the principal congruence subgroups. 

\medskip
It follows from
\begin{equation}\label{dividedpoly}
 x_i x_j\  =\   \binom{i+j}{i} x_{i+j}
 \end{equation}
that $\Delta[x]$ is a sub-algebra of $\Q[x],$
usually known as the \emph{divided  polynomial algebra} 
over $\Z$. 
One can define
$\Delta_R[x]:=\Delta[x]\otimes R$ over any ring $R.$

Let $p$ be a prime number and $p^a$ a power of $p$. Consider the $p-$adic valuation
\ $v:=v_p:\N\setminus\{0\}\to \N$ such that $p^{v(n)}$ is the maximum
power of $p$ dividing $n.$  

 Define the ideal  $I_{p^a}$  of $\Delta[x]$ as
$$I_{p^a}:= \ (p^{v(i)+a}\ x_i,\quad i\geq 1)$$
and call the quotient  $$ \Delta_{p^a}[x] := \Delta[x]/I_{p^a}$$
the \emph{ ${p^a}$-local divided polynomial algebra}.

The following two propositions were proven in \cite{b3cohom} for $a=1.$ Exactly the same proof works for any $a\geq 1.$

\begin{prop}\label{prop:algebra}
The ring $\Delta_{p^a}[x]$ is naturally isomorphic to the
quotient $$\Z[\xi_1,\xi_p, \xi_{p^2}, \xi_{p^3}, \ldots]/J_{p^a}$$ where
$J_{p^a}$ is the ideal generated by the polynomials  $$ p^a\xi_1\ , \quad 
\xi_{p^i}^p - p \xi_{p^{i+1}}\  (i \geq 1 ).$$
The element $\xi_{p^i}$ corresponds to the generator $x_{p^{i}} \in
\Delta_{p^a}[x],\ i\geq 0$. 
\qed\end{prop}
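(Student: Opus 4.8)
The plan is to reproduce, with $p$ replaced by $p^{a}$ in all torsion estimates, the proof given for $a=1$ in \cite{b3cohom}. First I would write down the comparison homomorphism explicitly. Put $m_i:=v_p\bigl((p^{i})!\bigr)=(p^{i}-1)/(p-1)$, so that $w_i:=(p^{i})!/p^{m_i}$ is an integer prime to $p$, and let $\xi_{p^i}\mapsto x^{p^i}/p^{m_i}=w_i\,x_{p^i}$; this defines a ring homomorphism $\Z[\xi_1,\xi_p,\xi_{p^2},\dots]\to\Delta[x]\subset\Q[x]$. Because $m_{i+1}=p\,m_i+1$, each element $\xi_{p^i}^{p}-p\,\xi_{p^{i+1}}$ is sent to $0$ already in $\Q[x]$, and because $v_p(1)=0$ the element $p^{a}\xi_1$ is sent to $p^{a}x_1\in I_{p^a}$; composing with the projection $\Delta[x]\to\Delta_{p^a}[x]$ gives a graded ring homomorphism
\[
\Phi\colon R:=\Z[\xi_1,\xi_p,\xi_{p^2},\dots]/J_{p^a}\longrightarrow\Delta_{p^a}[x],\qquad \deg\xi_{p^i}:=p^{i},
\]
which sends each $\xi_{p^i}$ to a unit multiple of $x_{p^i}$ --- and this is how ``corresponds to'' in the statement is to be understood.

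Next I would prove that $\Phi$ is surjective. For $n\geq 1$ write the base-$p$ expansion $n=\sum_j e_jp^{j}$. Iterating the product rule \eqref{dividedpoly} yields $\prod_j x_{p^j}^{e_j}=c\,x_n$ for an integer $c$, and since the summands $e_jp^{j}$ have disjoint base-$p$ supports no carries occur, so a Kummer count gives $\gcd(c,p)=1$; as the $w_j$ are also prime to $p$, $\Phi$ sends $\prod_j\xi_{p^j}^{e_j}$ to $u\,x_n$ with $u$ invertible modulo $p^{v_p(n)+a}$. Because $x_n$ has $p$-power order in $\Delta_{p^a}[x]$, this puts $x_n$ in the image of $\Phi$; together with $\Phi(1)=1$ it follows that $\Phi$ is onto.

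The core of the proof is injectivity, which I would obtain by comparing graded pieces. On the target side, the degree-$n$ part of $I_{p^a}$ is the subgroup of $\Z x_n$ generated by the integers $p^{v_p(i)+a}\binom{n}{i}$, $1\leq i\leq n$; using $i\binom{n}{i}=n\binom{n-1}{i-1}$ one finds its $p$-valuation equals $v_p(n)+a$, so $\Delta_{p^a}[x]_0=\Z$ and $\Delta_{p^a}[x]_n\cong\Z/p^{v_p(n)+a}$ for $n\geq 1$. On the source side, $R_0=\Z$ because every defining relation is homogeneous of positive degree; and for $n\geq 1$, repeatedly replacing $\xi_{p^j}^{p}$ by $p\,\xi_{p^{j+1}}$ --- a step that strictly decreases the total exponent of a monomial, so it terminates --- shows that $R_n$ is cyclic, generated by the unique base-$p$ monomial $\mu_n=\prod_j\xi_{p^j}^{e_j}$. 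Moreover, an induction starting from $p^{a}\xi_1=0$ and using the power relations gives $p^{a+j}\xi_{p^j}=0$ for all $j$, hence $p^{v_p(n)+a}\mu_n=0$ and $|R_n|\leq p^{v_p(n)+a}$. Since $\Phi$ restricts in degree $n\geq 1$ to a surjection between two groups now known to have the same finite order, and $R_0\cong\Z\cong\Delta_{p^a}[x]_0$, each graded component of $\Phi$ is an isomorphism, and therefore so is $\Phi$.

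The step I expect to be the main obstacle is precisely this last count: one must be certain that base-$p$ normalization of monomials, together with the torsion relation on $\xi_1$ propagated through the power relations, already accounts for all relations of $R$ in each degree, so that $R_n$ is cyclic of order at most $p^{v_p(n)+a}$. Granting that bound, the surjectivity of $\Phi$ automatically forces it to be an equality, and no further computation is needed.
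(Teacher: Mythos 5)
Your proof is correct and complete. Note that the paper itself contains no argument for this proposition: it simply asserts that the $a=1$ case was proved in \cite{b3cohom} and that ``exactly the same proof works for any $a\geq 1$.'' Your write-up is precisely that proof with $p$ replaced by $p^a$ in the two places it matters --- the computation $(I_{p^a})_n=p^{v_p(n)+a}\,\Z x_n$ and the induction $p^{a+j}\xi_{p^j}=0$ --- so it is the same graded-piece comparison: an explicit map $\xi_{p^i}\mapsto w_i x_{p^i}$, surjectivity from the carry-free factorization $\prod_j x_{p^j}^{e_j}=c\,x_n$ with $v_p(c)=0$, and injectivity by matching orders $|R_n|\leq p^{v_p(n)+a}=|\Delta_{p^a}[x]_n|$ degree by degree. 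The individual steps all check out: $m_{i+1}=pm_i+1$ gives well-definedness; the identity $i\binom{n}{i}=n\binom{n-1}{i-1}$ together with the $i=n$ generator $p^{v_p(n)+a}\binom{n}{n}$ pins down $(I_{p^a})_n$ exactly and not merely up to prime-to-$p$ factors; and the rewriting to the base-$p$ normal form terminates and lands on $\mu_n$ by uniqueness of base-$p$ expansions.

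One point you should make explicit rather than leave implicit: your normalization step replaces $\xi_1^p$ by $p\,\xi_p$, i.e.\ it uses the relation $\xi_{p^i}^p-p\,\xi_{p^{i+1}}$ for $i=0$, whereas the statement lists these relations only for $i\geq 1$. Taken literally, the statement is false --- in degree $p$ the presented ring would then be $\Z\xi_p\oplus(\Z/p^a)\xi_1^p$, which is not the cyclic group $\Z/p^{a+1}\cong\Delta_{p^a}[x]_p$ --- so ``$i\geq 1$'' must be read as ``$i\geq 0$'' (a typo carried over from the indexing of the generators). Your proof silently adopts the correct reading; say so, since without the $i=0$ relation the cyclicity of $R_n$, and hence the whole counting argument, collapses.
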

\bigskip

\begin{prop}
Let $\Z_{(p)}$ be the local ring obtained by inverting numbers prime
to $p$ and let  
$\Delta_{\Z_{(p)}}[x]$ be the divided polynomial algebra over $\Z_{(p)}.$ 
 One has an isomorphism:
 $$\Delta_{p^a}[x]\ \cong\ \Delta_{\Z_{(p)}}[x]/({p^a}x).$$
\qed\end{prop}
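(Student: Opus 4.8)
The plan is to exhibit an explicit ring map in one direction and check it is bijective on the $\Z$-module level, using the description of $\Delta_{p^a}[x]$ that comes from Proposition \ref{prop:algebra}. First I would recall that over $\Z_{(p)}$ the divided polynomial algebra $\Delta_{\Z_{(p)}}[x]$ has $\Z_{(p)}$-basis $\{x_n\}_{n\ge 0}$, and that inverting the primes $\ell\ne p$ does not change the ``$p$-part'' of the structure constants $\binom{i+j}{i}$ in \eqref{dividedpoly}; concretely $x_i x_j = \binom{i+j}{i} x_{i+j}$ still holds, now with $\binom{i+j}{i}$ viewed in $\Z_{(p)}$. The natural candidate map is the one induced by $x_n \mapsto \bar x_n$, the class of $x_n$; this is well defined from $\Delta[x]$ to $\Delta_{\Z_{(p)}}[x]/(p^a x)$ because $\Delta[x]\otimes\Z_{(p)} = \Delta_{\Z_{(p)}}[x]$, and it is a ring homomorphism since it is the composite of the localization $\Delta[x]\to\Delta_{\Z_{(p)}}[x]$ with a quotient map. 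The first real step is to show this map kills exactly the ideal $I_{p^a}$.

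Next I would analyse the ideal $(p^a x)$ inside $\Delta_{\Z_{(p)}}[x]$. Writing $x = x_1$ and using \eqref{dividedpoly} repeatedly, $x^a$-style products of the $x_i$ generate, so $p^a x \cdot x_j = p^a \binom{1+j}{1} x_{1+j} = p^a (j+1) x_{j+1}$. More usefully, one computes that $x_1^k = k!\, x_k$, hence $x_1 \cdot x_{k} $ contributes $(k+1)x_{k+1}$, and by iterating and extracting the $p$-part of the relevant factorials one finds that the ideal $(p^a x_1)$ of $\Delta_{\Z_{(p)}}[x]$ is precisely the $\Z_{(p)}$-span of the elements $p^a x_1 \cdot x_j$, $j\ge 0$. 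Identifying the $p$-adic valuations: the coefficient of $x_n$ arising this way has valuation $a$ plus the valuation of a product of consecutive integers, and a clean bookkeeping (this is the same computation already used in \cite{b3cohom} for $a=1$, only shifting the valuation by $a-1$) shows that modulo inverting the $\ell\ne p$, the ideal $(p^a x_1)$ coincides with the image of $I_{p^a} = (p^{v(i)+a} x_i : i\ge 1)$ under localization. Thus the induced map $\Delta_{p^a}[x] = \Delta[x]/I_{p^a} \to \Delta_{\Z_{(p)}}[x]/(p^a x_1)$ is a well-defined ring homomorphism, and it is visibly surjective since the $x_n$ generate the target as a $\Z_{(p)}$-module and each $x_n$ is hit.

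For injectivity I would compare the two sides as $\Z$-modules directly. On the left, $\Delta_{p^a}[x]$ has the $\Z$-module presentation with generators $x_n$ ($n\ge 0$) and relations $p^{v(n)+a} x_n = 0$ for $n\ge 1$ (this is exactly the definition of $I_{p^a}$ combined with the fact that $\{x_n\}$ is a $\Z$-basis of $\Delta[x]$), so as an abelian group $\Delta_{p^a}[x] \cong \Z \oplus \bigoplus_{n\ge 1} \Z/p^{v(n)+a}$. On the right, $\Delta_{\Z_{(p)}}[x]$ is free on $\{x_n\}$ over $\Z_{(p)}$, and quotienting by $(p^a x_1)$ imposes, for each $n\ge 1$, the relation that $p^a x_1$ times the appropriate basis element land on $p^{v(n)+a}\cdot(\text{unit})\cdot x_n$; so the quotient is $\Z_{(p)} \oplus \bigoplus_{n\ge 1} \Z_{(p)}/p^{v(n)+a}$. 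But $\Z/p^{k} \cong \Z_{(p)}/p^{k}$ canonically, and the displayed map sends $x_n \mapsto x_n$, so it is an isomorphism on each summand, hence an isomorphism of abelian groups, hence — being already a ring map — a ring isomorphism.

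The main obstacle I expect is the combinatorial identification of the ideal $(p^a x_1)$ with (the localization of) $I_{p^a}$: one must verify that for every $n\ge 1$ the minimal $p$-power annihilating $x_n$ in $\Delta_{\Z_{(p)}}[x]/(p^a x_1)$ is exactly $p^{v(n)+a}$, not something smaller coming from an unexpected product relation, nor something larger. This reduces to Kummer's theorem on the $p$-adic valuation of binomial coefficients applied to the structure constants $\binom{i+j}{i}$ — the same input used in \cite{b3cohom} — and the only new point is tracking the extra additive shift by $a$; once that is in hand the rest is the bookkeeping sketched above.
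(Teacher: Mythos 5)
Your overall strategy is sound and is, in substance, the argument the paper is invoking: the paper gives no proof of this proposition at all, only the remark that the proof for $a=1$ in \cite{b3cohom} goes through verbatim for general $a$, and that proof is precisely the direct comparison you carry out --- identify both sides as $\Z$-modules via the basis $\{x_n\}$ and match the orders of the cyclic summands. Your two key computations are correct: the ideal $(p^a x_1)\subset\Delta_{\Z_{(p)}}[x]$ is the $\Z_{(p)}$-span of the elements $p^a x_1x_{j}=p^a(j+1)x_{j+1}$, hence equals $\bigoplus_{n\ge1}p^{a+v(n)}\Z_{(p)}x_n$; and $I_{p^a}\subset\Delta[x]$ equals $\bigoplus_{n\ge1}p^{a+v(n)}\Z x_n$, the point being the inequality $v(i)+v_p\bigl(\binom{i+j}{i}\bigr)\ge v(i+j)$, which you correctly reduce to Kummer's theorem (it also follows in one line from the identity $i\binom{i+j}{i}=(i+j)\binom{i+j-1}{i-1}$). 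You rightly flag this valuation check as the only real content.

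One step as written is false, however: in degree $0$ your map is the inclusion $\Z\hookrightarrow\Z_{(p)}$, which is injective but not surjective, so the assertion that the map is ``an isomorphism on each summand'' fails for the $n=0$ summand --- your own displays exhibit the left side as $\Z\oplus\bigoplus_{n\ge1}\Z/p^{v(n)+a}$ and the right side as $\Z_{(p)}\oplus\bigoplus_{n\ge1}\Z_{(p)}/p^{v(n)+a}$, and these differ in degree zero. This is really an imprecision inherited from the statement itself: as defined, $\Delta_{p^a}[x]$ has degree-zero part $\Z$ while $\Delta_{\Z_{(p)}}[x]/(p^ax)$ has degree-zero part $\Z_{(p)}$. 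The isomorphism holds on the nose in positive degrees (equivalently, after tensoring the left-hand side with $\Z_{(p)}$, which changes nothing in positive degrees since everything there is $p$-power torsion), and only $\Delta^+_{p^a}$ is ever used in the sequel. Either restrict to positive degrees or state that you are comparing $\Delta_{p^a}[x]\otimes\Z_{(p)}$ with the right-hand side; with that caveat your argument is complete.
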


Notice that if \ $\Delta[x]$ \ is graded with $\deg x = k$, then 
$\Z[\xi_1, \xi_{p}, \xi_{p^2}, \ldots]/J_{p^a}$ is graded with $\deg
\xi_{p^i} = k p^i$.

A multi-variable divided polynomial algebra is defined as 
$$
\Delta[x,x',x'', \ldots] := \Delta[x] \otimes_\Z \Delta[x'] \otimes_\Z
\Delta[x'']\otimes_\Z \cdots 
$$
with the ring structure induced as subring of
$\Q[x,x',x'',\ldots]$. We have also
$$
\Delta_{p^a}[x,x',x'', \ldots] := \Delta_{p^a}[x] \otimes_\Z \Delta_{p^a}[x']\otimes_\Z
\Delta_{p^a}[x''] \otimes_\Z \cdots. 
$$

 We also define the submodule   
$$ {\Delta}^+_{p^a}[x, x', x'', \ldots] := {\Delta}_{p^a}[x, x', x'',
  \ldots]_{\deg > 0}.$$


We recall from \cite{b3cohom} the following desctiption of the first cohomology group of $\SL$:

\begin{thm}[\cite{b3cohom} Theorem 3.7]
Let $M=\oplus_{n\geq 0}\ M_n \simeq H^*((\CP^\infty)^2;\Z)$ be the $\SL$-module already defined. Then the cohomology $H^1(\SL;M_n)$ is given as follows:
\begin{enumerate}
\item the $p$-torsion component is given by $$H^1(\SL;M_n)_{(p)} = 
\Delta^+_p[\mathcal P_p, \mathcal Q_p]_{\deg = n}$$
where we fix gradings $\deg \mathcal P_p = 2(p+1)$ and 
$\deg \mathcal Q_p = 2p(p-1)$;
\item the free part is $$FH^1(\SL;M_n) = \Z^{f_n}$$ for $n > 0$ 
where the rank $f_n$ is given by the Poincar\'e series
\begin{equation*}
P^1_{\SL,0}(t)
= \sum_{n=0}^\infty f_n t^n = 
\frac{t^4(1+t^4-t^{12}+t^{16})}{(1-t^8)(1-t^{12})}. 
\end{equation*}
\end{enumerate}
\end{thm}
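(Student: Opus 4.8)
We outline the argument of \cite{b3cohom}. The plan is to compute $H^1(G;M_n)$ for $G=\SL$ from the amalgamated decomposition $G=\Z_4\ast_{\Z_2}\Z_6$, where $\Z_4=\langle a\rangle$ and $\Z_6=\langle b\rangle$ are generated by elements of orders $4$ and $6$ with $a^2=b^3=-I$ spanning the amalgamating centre $\Z_2=\{\pm I\}$ (one may take $a=s_1s_2s_1$ and $b=s_1s_2$). Since $M_n^{G}=0$ for $n>0$ (the only $\SL$-invariant polynomials are the constants), the Mayer--Vietoris sequence in group cohomology with coefficients in $M_n$ gives a short exact sequence
\begin{equation*}
0\to \frac{M_n^{\Z_2}}{M_n^{\Z_4}+M_n^{\Z_6}}\to H^1(G;M_n)\to \ker\!\big[H^1(\Z_4;M_n)\oplus H^1(\Z_6;M_n)\to H^1(\Z_2;M_n)\big]\to 0.
\end{equation*}
(One could equally work through \eqref{fibration}: the central $\Z$ acts trivially on $M$, so for $n>0$ the Lyndon--Hochschild--Serre spectral sequence collapses in total degree $1$ to $H^1(B_3;M_n)\cong H^1(G;M_n)$, and $H^1(B_3;M_n)$ is then read off from the two-term free $\Z B_3$-resolution attached, via Fox calculus, to the one-relator presentation of $B_3$.) Either route reduces the problem to explicit integral linear algebra: the cohomology of the cyclic factors is $2$-periodic and controlled by the norm operators $1+g+\cdots+g^{|g|-1}$ and the operators $g-1$ acting on the symmetric powers of the standard $\SL$-module, through the explicit action of $s_1,s_2$ recorded in the preceding section.

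For the free part I would argue as follows. Rationally the cohomology of finite groups vanishes in positive degrees, so the sequence above degenerates over $\Q$ to $H^1(G;M_n\otimes\Q)\cong\big(M_n^{\Z_2}/(M_n^{\Z_4}+M_n^{\Z_6})\big)\otimes\Q$, of dimension $\dim_\Q M_n^{\Z_2}-\dim_\Q M_n^{\Z_4}-\dim_\Q M_n^{\Z_6}$ for $n>0$; each summand is an elementary eigenvalue count for $-I,a,b$ on a symmetric power. The outcome agrees with Eichler--Shimura, $\dim H^1(\SL;\mathrm{Sym}^{k}\otimes\Q)=2\dim_{\C}S_{k+2}+1$ for even $k\ge 2$ (and $0$ otherwise), the first term being the parabolic (cusp-form) part and the $+1$ the one-dimensional Eisenstein contribution. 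The stated rational function for $\sum_n f_n t^n$ then follows by assembling the generating function from the classical Poincar\'e series of modular forms (of $\C[E_4,E_6]$, equivalently $\sum_w\dim_\C S_w\,q^w=q^{12}/((1-q^4)(1-q^6))$) and rewriting in the grading with $\deg x=\deg y=2$; as all the groups in sight are torsion-free, $FH^1(G;M_n)=\Z^{f_n}$. This is essentially Shimura's classical computation, made explicit over $\Q$.

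For the $p$-torsion, fix $p$ and localise at $p$. The $p$-primary part of $H^1(G;M_n)$ is built out of the $p$-torsion in the $2$-periodic cohomology of $\Z_4,\Z_6,\Z_2$, i.e.\ of cokernels and kernels of norm operators on symmetric powers reduced modulo powers of $p$; by Steinberg's theorem (Proposition~\ref{stei}) and its refinement, these invariants modulo $p^{k}$ are spanned by polynomials $c\,P(x^d,y^d)$ with prescribed $p$-divisibility of $cd$. The two basic classes come from invariant theory: let $\mathcal P_p,\mathcal Q_p$ be the Dickson invariants of $SL_2(\F_p)$ of polynomial degrees $p+1$ and $p(p-1)$, so that $\F_p[x,y]^{SL_2(\F_p)}=\F_p[\mathcal P_p,\mathcal Q_p]$; as elements of $H^0(G;M\otimes\F_p)$ their Bockstein images (for $0\to M\xrightarrow{\,p\,}M\to M\otimes\F_p\to 0$ and its $\Z_{p^k}$-analogues) lie in $H^1(G;M)_{(p)}$, in degrees $2(p+1)$ and $2p(p-1)$. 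The key algebraic point I would isolate is that the $p$-th power of an $SL_2(\F_p)$-invariant polynomial is, modulo one further power of $p$, proportional to an $SL_2(\Z_{p^2})$-invariant; feeding this through the Bockstein exact sequences produces exactly the divided-power relations $p\,\mathcal P_p=0$ and $(\mathcal P_p^{(p^i)})^{p}=p\,\mathcal P_p^{(p^{i+1})}$ (and likewise for $\mathcal Q_p$) inside the bigraded ring $\bigoplus_n H^{*}(G;M_n)$. By Proposition~\ref{prop:algebra} this defines a graded ring homomorphism $\Delta^+_{p}[\mathcal P_p,\mathcal Q_p]\to H^1(G;M)_{(p)}$, which I would then prove is an isomorphism degree by degree by matching the Poincar\'e series of the source (computed from the structure of $\Delta_{p}[x]$) with that of the target (computed from the Mayer--Vietoris sequence).

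I expect the $p$-torsion step to be the main obstacle: establishing the divided-power relations at the level of invariant polynomials and cocycles --- the interplay between $p$-th powers and reduction modulo $p^{k}$ that makes the algebra ``divided'' rather than polynomial --- and then carrying out the Poincar\'e-series comparison. The reduction to the cyclic factors is purely formal, and the rational computation is, as indicated, classical.
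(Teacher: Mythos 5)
First, a caveat about the comparison you were asked to survive: this paper does not prove the statement at all --- it is Theorem 3.7 of \cite{b3cohom}, recalled verbatim for later use --- so there is no in-text proof to measure your sketch against. Judged against the strategy of the cited paper (which the present paper visibly imitates for $\Gamma(m)$), your reconstruction is essentially faithful. The free part is Shimura's computation, and your generating-function bookkeeping is correct: $\frac{2t^{20}}{(1-t^{8})(1-t^{12})}+\frac{t^{4}}{1-t^{4}}$ does simplify to the stated rational function. The torsion is obtained by reducing, via the Universal Coefficient sequence $0\to H^0(\SL;M_n)\otimes\Z_{p^k}\to H^0(\SL;M_n\otimes\Z_{p^k})\to \mathrm{Tor}\bigl(H^1(\SL;M_n),\Z_{p^k}\bigr)\to 0$ (whose first term vanishes for $n>0$), to the invariant-theoretic computation of $(M\otimes\Z_{p^k})^{\SL}$ by Dickson's theorem and Steinberg's refinement (Proposition \ref{stei}); the generators $\mathcal P_p,\mathcal Q_p$ are indeed the Dickson invariants in degrees $2(p+1)$ and $2p(p-1)$, and the divided-power relations of Proposition \ref{prop:algebra} come from the interplay of $p$-th powers with reduction modulo $p^{k}$, exactly as you isolate. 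These are the same ingredients (Propositions \ref{stei}, \ref{p:stei_gen}, \ref{prop:algebra} and the Universal Coefficients paragraph) that the present paper deploys for $H^1(\Gamma(m);M_n)$.

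One sentence of your sketch misidentifies the source of the torsion and should be deleted rather than repaired. You write that the $p$-primary part of $H^1(\SL;M_n)$ is ``built out of the $p$-torsion in the $2$-periodic cohomology of $\Z_4,\Z_6,\Z_2$.'' For every prime $p\geq 5$ these groups have order prime to $p$, so $H^{>0}(\Z_4;M_n)\oplus H^{>0}(\Z_6;M_n)$ and $H^{>0}(\Z_2;M_n)$ contain no $p$-torsion whatsoever; in your Mayer--Vietoris sequence the quotient $\ker\bigl[H^1(\Z_4;M_n)\oplus H^1(\Z_6;M_n)\to H^1(\Z_2;M_n)\bigr]$ is annihilated by $12$, and the entire $p$-primary part for $p\geq 5$ sits in the cokernel of $M_n^{\Z_4}\oplus M_n^{\Z_6}\to M_n^{\Z_2}$, i.e.\ it is torsion of a cokernel of a map of free abelian groups, produced by the connecting homomorphism $H^0(\Z_2;M_n)\to H^1(\SL;M_n)$. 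That is precisely the Bockstein/lifting-obstruction mechanism you describe correctly two sentences later, so the sketch is self-correcting; but as literally stated the quoted sentence would lead the computation astray for all but the primes $2$ and $3$.
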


We come back now to the torsion part of the cohomology of the
principal congruence subgroups $\Gamma(n).$ 

By means of the Universal Coefficients Theorem we can compute
the $p$-torsion part of the first cohomology group  $H^1(\Gamma(m);M_n)$
by comparing it with the group $H^0(\Gamma(m);M_n\otimes \Z_{p^a})$.
This is described in Propopsition \ref{p:p_does_not_divide} in the case of a 
prime $p$ that does not divide $m$ and in Proposition \ref{p:stei_gen} in the case
that $p$ divides $m$. 

\begin{thm}\label{H_1gamman}
Let $p$ be a prime number and $m>1$ an integer. 
If $p$ is a prime that does not divides $m$ the $p$-torsion 
component of $H^1(\Gamma(m);M_n)$ is given by:
$$ H^1(\Gamma(m);M_n)_{(p)} = H^1(\SL;M_n)_{(p)}  = 
\Delta^+_p[\mathcal P_p, \mathcal Q_p]_{\deg = n}$$
where $\mathcal P_p$ and $\mathcal Q_q$ are elements of degree  $\deg \mathcal P_p = 2(p+1)$ and 
$\deg \mathcal Q_p = 2p(p-1)$.
If $p$ divides $m$, suppose $p^a \mid m, p^{a+1} \nmid m$. Then we have 
$$
H^1(\Gamma(m);M_{>0})_{(p)} \simeq \Delta_{p^a}^+[x,y]
$$
where $x,y$ are elements of degree $1$. \qed
\end{thm}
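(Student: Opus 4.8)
The plan is to treat the two cases separately, and in each case use the Universal Coefficients Theorem to reduce the computation of the $p$-torsion of $H^1(\Gamma(m);M_n)$ to the computation of the invariants $H^0(\Gamma(m);M\otimes\Z_{p^a})$ for varying $a$. Recall that by UCT the $p$-torsion of $H^1(\Gamma(m);M_n)$ is detected by the groups $H^0(\Gamma(m);M_n\otimes\Z_{p^a})$ together with the Bockstein structure relating them: concretely, an element of order $p^a$ in $H^1(\Gamma(m);M_n)_{(p)}$ corresponds to a class in $H^0(\Gamma(m);M_n\otimes\Z_{p^a})$ which is not the reduction of a class in $H^0(\Gamma(m);M_n\otimes\Z_{p^{a+1}})$. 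So the whole $p$-torsion is encoded in the tower $\{H^0(\Gamma(m);M\otimes\Z_{p^b})\}_b$ with the natural reduction maps, and I must identify this tower, as a graded abelian group with its internal Bockstein data, with the corresponding tower for $\Delta^+_{p^?}[x,y]$ (with the appropriate $?$).

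First, the case $p\nmid m$. By Proposition \ref{p:p_does_not_divide} we have $H^0(\Gamma(m);M_n\otimes\Z_{p^a})=H^0(\SL;M_n\otimes\Z_{p^a})$ for every $a$, and these identifications are compatible with the reduction maps $\Z_{p^{a+1}}\to\Z_{p^a}$ because they are both just the inclusion of the submodule of $\SL$-invariants (equivalently $\Gamma(m)$-invariants). Hence the entire $p$-torsion tower for $\Gamma(m)$ coincides with that for $\SL$, and therefore $H^1(\Gamma(m);M_n)_{(p)}=H^1(\SL;M_n)_{(p)}$, which by the quoted Theorem \ref{b3cohom} Theorem 3.7 equals $\Delta^+_p[\mathcal P_p,\mathcal Q_p]_{\deg=n}$. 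Strictly, I should note that the free part of $H^1(\Gamma(m))$ is larger, but UCT isolates the torsion, so this does not interfere.

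Second, the case $p^a\mid m$, $p^{a+1}\nmid m$. By Proposition \ref{p:stei_gen} the module $H^0(\Gamma(m);M\otimes\Z_{p^b})$ is generated by polynomials $cP(x^d,y^d)$ with $p^{b-a}\mid cd$; moreover by the same proposition $H^0(\Gamma(m);M\otimes\Z_{p^b})=H^0(\Gamma(p^a);M\otimes\Z_{p^b})$, so I may as well work with $\Gamma(p^a)$. The plan is to show that, as $b$ ranges over $b\ge a$, the tower of these invariant modules with its reduction maps is exactly the tower $\{\Delta^+_{p^b}[x,y]\}$... more precisely, that the inverse limit data match those defining $\Delta^+_{p^a}[x,y]$. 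The key structural input is the $p$-adic valuation condition: a monomial $x^iy^j$ of total degree $n=i+j$ survives in $H^0(\Gamma(p^a);M\otimes\Z_{p^b})$ (up to a scalar $c$) precisely when $p^a\mid i$ and $p^a\mid j$ forces $d$ a multiple of $p^a$, and then the minimal scalar $c$ one must attach is governed by $v_p$ of the multinomial coefficients — exactly the relations $p^a\xi_1$, $\xi_{p^i}^p-p\xi_{p^{i+1}}$ of Proposition \ref{prop:algebra} with $a$ in place of $1$. So I would: (i) produce the explicit generators $x=x_1$, $y=y_1$ (the classes carrying the scalar $p^{b-a}$ in degree $1$) and their divided powers $x_{p^i}$, $y_{p^i}$; (ii) check that the relation $p^a x_1=0$ holds and that $x_{p^i}^p=p\,x_{p^{i+1}}$ up to the ideal, using \eqref{dividedpoly} and the valuation of $\binom{p^{i+1}}{p^i}$; (iii) verify via Steinberg's description (Proposition \ref{p:stei_gen}) that these generate everything and there are no further relations, i.e. the surjection $\Delta^+_{p^a}[x,y]\to H^1(\Gamma(m);M_{>0})_{(p)}$ is an isomorphism by a dimension/cardinality count in each degree. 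The main obstacle is step (iii): one must match not just the underlying graded groups but the Bockstein/order structure, which amounts to showing that Steinberg's generating set is minimal with exactly the divided-power relations and no accidental extra torsion killed at finite stages — this is where the bulk of the work lies, and it is handled by comparing, degree by degree, the order of the cyclic summand predicted by the $\Delta^+_{p^a}$ relations against the minimal $c$ allowed by the condition $p^{b-a}\mid cd$ in Proposition \ref{p:stei_gen}.
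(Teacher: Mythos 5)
Your proposal follows exactly the route the paper takes: the paper's entire justification for Theorem \ref{H_1gamman} is the preceding paragraph invoking the Universal Coefficients Theorem to read off the $p$-torsion of $H^1(\Gamma(m);M_n)$ from the tower $H^0(\Gamma(m);M_n\otimes\Z_{p^b})$, settled by Proposition \ref{p:p_does_not_divide} when $p\nmid m$ and by Proposition \ref{p:stei_gen} (Steinberg) when $p\mid m$. Your write-up is in fact more detailed than the paper's, correctly identifying the matching of Steinberg's condition $p^{b-a}\mid cd$ with the relations $p^{v(i)+a}x_i$ defining $\Delta^+_{p^a}[x,y]$ as the substantive step.
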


\begin{rmk}
For $m >2$, since $\Gamma(m)$ is free and hence it has homological dimension $1$, a complete description of the cohomology of $\Gamma(m)$ comes directly from theorem \ref{H_1gamman}.
For $m=2$ the description of the cohomology of $\Gamma(m)$ follows from  theorem \ref{H_1gamman} and proposition \ref{p:Gamma2}. 
\end{rmk}

Now we focus on the description of the cohomology of the groups $B_{\Gamma(m)}$
defined in the introduction.

The Serre spectral sequence for the extension $\Z \to B_{\Gamma(2)}
\to \Gamma(2)$ is concentrated on the first two lines and the
$E_2$-term is the following:

\begin{center}
\begin{tabular}{l}
\xymatrix @R=1pc @C=1pc {
\H1 H^0(\Gamma(2),M) \ar[rrd]^(.7){d_2} & H^1(\Gamma(2),M)
\ar[rrd]^(.7){d_2} & H^2(\Gamma(2),M) &  \cdots \\
H^0(\Gamma(2),M) & H^1(\Gamma(2),M) & H^2(\Gamma(2),M) &  \cdots
} \\
\end{tabular}
\end{center}
\medskip
hence, for $n$ even we get:
\medskip

\def\K#1{\save
[].[drrrr]="g#1"*[F-,]\frm{}\restore}%
\begin{center}
\begin{tabular}{l}
\xymatrix @R=1pc @C=1pc {
\K1 H^0(F_2,M_n) \ar[rrd]^(.7){d_2} & H^1(F_2,M_n) \ar[rrd]^(.7){d_2}
& H^0(F_2,M_n\otimes \Z_2) & H^1(F_2,M_n)\otimes \Z_2 &  \cdots \\
H^0(F_2,M_n) & H^1(F_2,M_n) & H^0(F_2,M_n\otimes \Z_2) &
H^1(F_2,M_n)\otimes \Z_2 &  \cdots
} \\
\end{tabular}
\end{center}
\medskip
and for $n$ odd:
\medskip

\begin{center}
\begin{tabular}{l}
\xymatrix @R=1pc @C=1pc {
\K1 0 & H^0(F_2,M_n\otimes \Z_2) \ar[rrd]^(.7){d_2} &
H^1(F_2,M_n)\otimes \Z_2 & H^0(F_2,M_n\otimes \Z_2) &  \cdots \\
0 & H^0(F_2,M_n\otimes \Z_2) & H^1(F_2,M_n)\otimes \Z_2 &
H^0(F_2,M_n\otimes \Z_2 )&  \cdots
} \\
\end{tabular}
\end{center}
\medskip

Recall that the group $B_{\Gamma(2)} = P_3$ has homological dimension $2$ and hence the spectral 
sequence above converges to $E_{\infty}^{i,j}=0$ for $i+j>2.$ 

For $n$ even we can compute the first differential
$d_2^{0,1}:E^{0,1}_2 \to E_2^{2,0}$ comparing the spectral
sequence above with the spectral sequence for the extension $\Z \stackrel{2}{\to}\Z \to \Z_2$
associated to the subgroup 
$\Z_2 \subset \Gamma(2) = F_2 \times \Z_2$. It follows that the differential $d_2^{0,1}$ is the map
$$M_0^{\Gamma(2)} \to (M_0 \otimes \Z_2)^{\Gamma(2)} $$
induced by the projection map $M_0 \to M_0 \otimes \Z_2$. Hence $\Z
\simeq \ker d_2^{0,1} = 2\Z \subset M_0 = \Z$ and
$\coker d_2^{0,1} = M_{>0} \otimes \Z_2$.
The differential $d_2^{1,1}:E^{1,1}_2 \to E_2^{3,0}$ is surjective. 
All the other maps $d_2:E^{k,1}_2 \to E_2^{k+2,0}$ are isomorphisms.

For odd $n$ the first differential $d^{0,1}_2:E^{0,1}_2 \to E_2^{2,0}$ is
zero and all the other maps $d_2^{k,1}:E^{k,1}_2 \to E_2^{k+2,0}$ are
isomorphisms. So $E_{\infty}^{i,j}=0$ except for $(i,j)=(1,0)$ or $(i,j)=(2,0).$    

For $n$ even the non-zero $E_\infty$ terms of the spectral sequence
with total degree $2$ are 
$$
E^{1,1}_\infty = 2 (\Delta^+_2[x,y])_{\deg = n}
$$
and
$$
E^{2,0}_\infty = M_{n} \otimes \Z_2 = (\Delta^+_2[x,y] \otimes \Z_2)_{\deg = n}.
$$
We claim that there is an extension such that $H^2(B_{\Gamma(2)};M)
\simeq H^1(\Gamma(2);M).$ 
This can be proved using methods similar to those used in
\cite{b3cohom}. We sketch here the main ideas of the proof. 
We can consider the spectral sequence $\overline E$ for
$\Z \to B_{\Gamma(2)} \to \Gamma(2)$ 
with coefficients in the module $M \otimes \Z_{2^k}$ for any $k$. 
Let $\Z_{2^i}$ be a module that appears as a direct summand in
$H^1(F_2, M_n) = E^{1,1}_2$. 

If $k < i$ then  
by Universal Coefficients Theorem the module $\Z_{2^i}$ determines a
module $\Z_{2^k}$ that is a direct summand  
in $H^0(F_2, M_n\otimes \Z_{2^k}) = \overline{E}^{0,1}_2$; moreover
the differential $\overline{d}_2$ 
restricted to $\Z_{2^k}$ is non trivial, as one can see comparing the spectral sequence
$\{\overline{E}_r\}$ with the one associated to the extension $\Z \stackrel{2}{\to}\Z \to \Z_2$.

If $k \geq i$ then by Universal Coefficients Theorem the module
$\Z_{2^i}$ determines a module $\Z_{2^i}$ that is a direct summand  
in $H^0(F_2, M_n\otimes \Z_{2^k}) = \overline{E}^{0,1}_2$; the study
of the Bockstein map $\beta_k$ 
for the extension $0 \to \Z_{2^k} \to \Z_{2^{2k}} \to \Z_{2^k}\to 0$ shows that the
differential $\overline{d}_2$ 
restricted to $\Z_{2^k}$ is non-trivial and the image
$\overline{d}_2(\Z_{})$ is a direct summand of $\overline E_2^{2,0}$
isomorphic to $\Z_2$. 

This means that the summand $\Z_{2^i}$ in $E^{1,1}_2$ give a contribution to 
the cardinality 
of $\mid \!  \! \oplus_{i+j=2}\overline{E}^{i,j}_\infty \! \! \mid = $
$ =  \mid \!  \! H^2(B_{\Gamma(2)};M \otimes \Z_{2^k}) \! \! \mid$
corresponding to a factor $2^{k-1}$ for $k < i$ and to 
a factor $2^{i-1}$ for $k \geq i$.  
It follows by an argument similar to that used in \cite[section 7]{b3cohom}
that the term $\Z_{2^i}$ in $H^1(F_2, M_n) =
E^{1,1}_2$ corresponds to a term 
$\Z_{2^{i-1}}$ in ${E}_\infty^{1,1}$ and there is a summand $\Z_2$ in  
${E}_\infty^{2,0}$ which produce a non-trivial extension
$$
0 \to \Z_2 \to \Z_{2^i} \to \Z_{2^{i-1}} \to 0
$$
in the exact sequence 
$$
0 \to E^{2,0}_\infty \to H^2(B_{\Gamma(2)};M) \to E^{1,1}_\infty \to 0.
$$
Hence there is a direct summand $\Z_{2^i}$ in $H^2(B_{\Gamma(2)};M)$ 
associated to the direct summand 
$\Z_{2^i} \subset H^1(F_2, M_n)$. 


%

%

For $m \neq 2$, as we already said, the group $\Gamma(m)$ is free. 
Hence its cohomology is trivial in dimension bigger than $1$ and it
follows that the spectral sequence for the extension $\Z \to
B_{\Gamma(m)} \to \Gamma(m)$ collapses at 
the $E_2$-term. There is no non-trivial 
extension involved in the $E_{\infty}$-term, since 
the cohomology group $H^*(\Gamma(m);M_0)$ is torsion free and 
for $n>0$ the cohomology
$H^*(\Gamma(m);M_n)$ is concentrated in dimension $1$.

Hence we can describe the cohomology of $B_{\Gamma(m)}$ 
as follows:
\begin{thm}\label{bgamma}
The cohomology of the group $B_{\Gamma(m)}$ with
coefficients in the module $M = \Z[x,y]$ is given as follows:
\begin{itemize}
\item[a)] for $m=2$ and $n=0$  
$$
H^0(B_{\Gamma(2)}; M_0) = \Z,
$$
$$
H^1(B_{\Gamma(2)}; M_0) = \Z^3
$$
$$
H^2(B_{\Gamma(2)}; M_0) = \Z^2
$$
and all the others cohomology groups are zero;
\item[b)] for $m=2$ and even $n>0$ we have 
$$
H^0(B_{\Gamma(2)}; M_n) = H^0(\Gamma(2); M_n),
$$
$$
H^1(B_{\Gamma(2)}; M_n) = H^2(B_{\Gamma(2)}; M_n) = H^1(\Gamma(2); M_n)
$$
and all the others cohomology groups are zero;
\item[c)] for $m=2$ and odd $n$ 
$$
H^1(B_{\Gamma(2)}; M_n) = H^1(\Gamma(2); M_n) = M_n \otimes \Z_2,
$$
$$
H^2(B_{\Gamma(2)}; M_n) = H^2(\Gamma(2); M_n) = (M_n \oplus M_n) \otimes \Z_2
$$
and all the others cohomology groups are zero;
\item[d)] for any $m > 2$ 
$$
H^*(B_{\Gamma(m)}; M_n) = H^*(\Gamma(m); M_n) \otimes 
H^*(\Z; \Z).
$$
\end{itemize}

\vspace{-\baselineskip}
\qed
\end{thm}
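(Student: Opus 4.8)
The plan is to run, for each fixed $n$, the Serre spectral sequence of the central extension $\Z\to B_{\Gamma(m)}\to\Gamma(m)$ obtained by pulling (\ref{fibration}) back along $\Gamma(m)\subset\SL$. Since $j(\Z)=\ker\lambda$ is twice the centre of $B_3$, it is central in $B_{\Gamma(m)}$ and acts trivially on $M$, so the fibre cohomology $H^*(\Z;M_n)$ is a copy of $M_n$ placed in cohomological degrees $0$ and $1$ with the ambient $\Gamma(m)$-action; hence $E_2^{p,q}=H^p(\Gamma(m);M_n)$ for $q\in\{0,1\}$ and $E_2^{p,q}=0$ otherwise. The spectral sequence therefore lives in the two rows $q=0,1$, and the only possibly non-zero differentials are the $d_2^{k,1}\colon E_2^{k,1}\to E_2^{k+2,0}$.

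\emph{Case $m>2$ (part d).} By the discussion in Section \ref{Principal congruence subgroups}, $\Gamma(m)$ is free of finite rank, so $E_2^{p,q}=0$ for $p\ge 2$ and the spectral sequence already collapses at $E_2$. When $n=0$ the groups $H^*(\Gamma(m);M_0)$ are free abelian by Proposition \ref{p:rankH}, so the only potentially non-split filtration quotient, occurring in total degree $1$, is an extension of free abelian groups and splits; when $n>0$ one has $H^0(\Gamma(m);M_n)=0$ and $H^j(\Gamma(m);M_n)=0$ for $j\ge 2$, so no extension problem arises. Either way $H^*(B_{\Gamma(m)};M_n)\cong H^*(\Gamma(m);M_n)\otimes H^*(\Z;\Z)$.

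\emph{Case $m=2$: set-up and the clean subcases.} Here $\Gamma(2)=F_2\times\Z_2$ and $B_{\Gamma(2)}=P_3$ both have cohomological dimension $2$. Substituting the groups $H^*(\Gamma(2);M_n)$ from Proposition \ref{p:Gamma2} into the two rows reproduces exactly the $E_2$-pages displayed before the statement. The differentials are pinned down by naturality: restriction along $\Z_2\subset\Gamma(2)=F_2\times\Z_2$ maps our spectral sequence to that of $\Z\stackrel{2}{\to}\Z\to\Z_2$, whose $d_2$ is reduction mod $2$; this identifies $d_2^{0,1}$ with the reduction map $H^0(\Gamma(2);M)\to H^0(\Gamma(2);M\otimes\Z_2)$, and, together with $E_\infty^{p,q}=0$ for $p+q>2$ (cohomological dimension of $P_3$), it forces $d_2^{1,1}$ to be surjective and every $d_2^{k,1}$ with $k\ge 2$ to be an isomorphism. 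For $n=0$ one reads off $\ker d_2^{0,1}=2\Z\cong\Z$, $\coker d_2^{0,1}=0$ and $\ker d_2^{1,1}\cong\Z^2$; assembling and splitting the extension of free abelian groups in total degree $1$ gives $H^0=\Z$, $H^1=\Z^3$, $H^2=\Z^2$ and zero in higher degrees, which is part a). For odd $n$, $d_2^{0,1}$ vanishes and the other $d_2^{k,1}$ are isomorphisms, so the only terms surviving in total degrees $1$ and $2$ are $E_\infty^{1,0}=H^1(\Gamma(2);M_n)=M_n\otimes\Z_2$ and $E_\infty^{2,0}=H^2(\Gamma(2);M_n)=(M_n\oplus M_n)\otimes\Z_2$, with nothing to extend; this is part c).

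\emph{Case $m=2$, even $n>0$ (part b), and the main obstacle.} Now $E_2^{0,1}=H^0(\Gamma(2);M_n)=0$, so $H^0(B_{\Gamma(2)};M_n)=H^0(\Gamma(2);M_n)$ and $H^1(B_{\Gamma(2)};M_n)=E_\infty^{1,0}=H^1(\Gamma(2);M_n)$, while in total degree $2$ there remains the short exact sequence $0\to E_\infty^{2,0}\to H^2(B_{\Gamma(2)};M_n)\to E_\infty^{1,1}\to 0$ with $E_\infty^{2,0}=M_n\otimes\Z_2$ and $E_\infty^{1,1}=\ker\bigl(d_2^{1,1}\colon H^1(\Gamma(2);M_n)\twoheadrightarrow H^1(\Gamma(2);M_n)\otimes\Z_2\bigr)$. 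The content of part b) is that this sequence reassembles to $H^2(B_{\Gamma(2)};M_n)\cong H^1(\Gamma(2);M_n)$. The odd-torsion part of $H^1(\Gamma(2);M_n)$ survives to $E_\infty^{1,1}$, hence to $H^2$, untouched, since $d_2^{1,1}$ lands in the $2$-torsion group $E_2^{3,0}$; for the $2$-primary part I would argue as in \cite[Section~7]{b3cohom}: rerun the spectral sequence with coefficients in $M\otimes\Z_{2^k}$ for all $k\ge 1$, note that a cyclic summand $\Z_{2^i}$ of $H^1(\Gamma(2);M_n)=E_2^{1,1}$ gives, by the Universal Coefficient Theorem, a summand $\Z_{2^{\min(i,k)}}$ of $\overline{E}^{0,1}_2$, and, by comparison with the spectral sequence of $\Z\stackrel{2}{\to}\Z\to\Z_2$ and analysis of the Bockstein $\beta_k$ of $0\to\Z_{2^k}\to\Z_{2^{2k}}\to\Z_{2^k}\to 0$, that $\overline{d}_2$ is non-trivial on this summand with image a $\Z_2$-summand of $\overline{E}^{2,0}_2$. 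Comparing the orders $\bigl|\bigoplus_{p+q=2}\overline{E}^{p,q}_\infty\bigr|=\bigl|H^2(B_{\Gamma(2)};M\otimes\Z_{2^k})\bigr|$ for all $k$ then forces the summand $\Z_{2^i}$ of $E_2^{1,1}$ to contribute $\Z_{2^{i-1}}$ to $E_\infty^{1,1}$ glued to a $\Z_2$ in $E_\infty^{2,0}$ by the non-split extension $0\to\Z_2\to\Z_{2^i}\to\Z_{2^{i-1}}\to 0$. Summing over summands yields $H^2(B_{\Gamma(2)};M_n)\cong H^1(\Gamma(2);M_n)$, and all higher cohomology vanishes because $P_3$ has cohomological dimension $2$. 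This last step is the main obstacle: the spectral sequence only outputs the associated graded in total degree $2$, and recovering $H^1(\Gamma(2);M_n)$ from it genuinely requires the mod-$2^k$ bookkeeping and the Bockstein computation, whereas everything else is a direct reading of the $E_2$-page inside the strip $0\le q\le 1$, $0\le p\le 2$.
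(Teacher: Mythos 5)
Your proposal is correct and follows essentially the same route as the paper: the Serre spectral sequence for $\Z\to B_{\Gamma(m)}\to\Gamma(m)$, concentrated in the rows $q=0,1$, with $d_2$ identified by comparison with the extension $\Z\stackrel{2}{\to}\Z\to\Z_2$, collapse and trivial extensions for $m>2$, and the mod-$2^k$ coefficient/Bockstein bookkeeping to resolve the extension $0\to E_\infty^{2,0}\to H^2(B_{\Gamma(2)};M_n)\to E_\infty^{1,1}\to 0$ for even $n>0$. The only differences are organizational (you treat the cases in a slightly different order and make explicit the vanishing of $\coker d_2^{0,1}$ in degree $0$), not mathematical.
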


\providecommand{\bysame}{\leavevmode\hbox
to3em{\hrulefill}\thinspace}
\providecommand{\MR}{\relax\ifhmode\unskip\space\fi MR }
\providecommand{\MRhref}[2]{%
  \href{http://www.ams.org/mathscinet-getitem?mr=#1}{#2}
} \providecommand{\href}[2]{#2}

\bibliographystyle{amsalpha}
\bibliography{biblio}


\end{document}